\newtheorem{theorem}{Theorem}[section]
\newtheorem{lemma}[theorem]{Lemma}
\newtheorem{proposition}[theorem]{Proposition}
\newtheorem{corollary}[theorem]{Corollary}
\theoremstyle{definition}
\newtheorem{definition}[theorem]{Definition}
\newtheorem{remark}[theorem]{Remark}
\newtheorem{example}[theorem]{Example}
\newcommand{\Fun}{\mathrm{Fun}}
\newcommand{\rad}{\mathrm{rad}}
\newcommand{\Def}{\mathsf{Def}}
\newcommand{\End}{\mathrm{End}}
\newcommand{\C}{\mathbb{C}}
\newcommand{\F}{\mathbb{F}}
\newtheorem{principle}[theorem]{Principle}
\title{Algebraic Phase Theory I: Radical Phase Geometry and Structural Boundaries}
\author{Joe Gildea}
\author{
Joe Gildea\\
Department of Computing Science and Mathematics,\\
School of Informatics and Creative Arts,\\
Dundalk Institute of Technology\\
\texttt{gildeajoe@gmail.com}}
\date{}
\begin{document}
\maketitle

\begin{abstract}
We develop Algebraic Phase Theory (APT), an axiomatic framework for extracting
intrinsic algebraic structure from phase based analytic data.  From minimal
admissible phase input we prove a general phase extraction theorem that yields
algebraic Phases equipped with functorial defect invariants and a uniquely
determined canonical filtration.  Finite termination of this filtration forces
a structural boundary: any extension compatible with defect control creates new
complexity strata.

These mechanisms are verified in the minimal nontrivial setting of quadratic
phase multiplication operators over finite rings with nontrivial Jacobson
radical.  In this case nilpotent interactions produce a finite filtration of
quadratic depth, and no higher degree extension is compatible with the axioms.
This identifies the radical quadratic Phase as the minimal example in which
defect, filtration, and boundary phenomena occur intrinsically.
\end{abstract}

\medskip
\noindent\textbf{Mathematics Subject Classification (2020).}
Primary 16S34; Secondary 20C99, 11B30, 22E25.

\medskip
\noindent\textbf{Keywords.}
Algebraic Phase Theory; admissible phase data; phase operators; defect and
canonical filtration; nilpotent rings; Jacobson radical; quadratic phases;
finite termination; structural boundary; functorial invariants.

\section{Introduction}

Many analytic systems exhibit structured behaviour that is governed not by
smoothness or regularity, but by the interaction of phases. Over fields, such
interactions are algebraic in nature: additive derivatives measure polynomial
structure, polarization encodes bilinear and quadratic interactions, and
representation theory organizes the resulting operator behaviour through models
such as the Weil representation \cite{Weil1964SurCertains}, the Heisenberg and
related nilpotent constructions of Howe \cite{Howe1979Heisenberg}, and the
induced representation theory of Mackey \cite{Mackey1958Induced}.

Over finite rings with nilpotent coefficients, these principles break down.
Phase interactions become sensitive to behaviour that is invisible to ordinary
characters, and naive extensions of bilinear or quadratic theory fail. These
failures are structural in origin. They reflect genuine algebraic obstructions
created by nilpotence rather than deficiencies of analytic formulation.

The first central observation of this paper is that, in such settings, analytic
phase phenomena admit a finite algebraic description only up to a sharp
boundary. Beyond this boundary, no extension of the theory can preserve
functoriality, intrinsic meaning, and finite control. The purpose of this work
is to isolate and explain this boundary.

\medskip

To accomplish this, we introduce \emph{Algebraic Phase Theory} (APT). APT is a
framework that extracts canonical algebraic structure directly from analytic
phase behaviour. Starting from analytic phase data, consisting of a family of
functions together with additive derivatives and basic interaction laws, APT
identifies three intrinsic invariants: the defect, which records the first
nonvanishing additive derivative; defect tensors, which encode the first
obstruction to additivity; and a canonical finite filtration whose layers are
determined entirely by defect degree.

From these analytic invariants, APT constructs a canonical algebraic object
called the \emph{algebraic Phase}. This object captures the interaction laws of
the original analytic system in a functorial and presentation-independent way.
It retains all defect information, carries a finite filtration determined by
analytic degree, and terminates at a structural boundary that cannot be crossed
without violating the axioms. The translation from analytic to algebraic
structure is functorial, intrinsic, and independent of analytic scaffolding
such as norms, topology, or operator theory.

\medskip

The perspective of APT differs from existing approaches based on harmonic
analysis, nilmanifolds, polynomial dynamics, or representation-theoretic
models. Such frameworks provide powerful realizations of phase interactions but
rely on analytic or geometric structure. APT instead works directly with
additive derivatives, identifies defect as the central invariant, and constructs
the algebraic Phase purely from functorial analytic inputs.

This conceptual shift clarifies the relationship between analytic and algebraic
viewpoints. Analytic and algebraic behaviour coincide through defect. Higher
order structure is organized by a canonical finite filtration. The depth of this
filtration is forced entirely by analytic degree. Structural boundaries such as
quadratic or cubic limits appear as intrinsic features rather than artifacts of
representation. Rigidity phenomena in ergodic theory, additive combinatorics,
harmonic analysis, and algebraic coding theory share underlying algebraic
features even though they arise in very different analytic settings. APT
isolates exactly this algebraic core and places it within a unified framework.

\medskip

APT distinguishes between strongly admissible and weakly admissible phase data.
In the strongly admissible regime, defect propagation is fully controlled by the
axioms and all boundaries are dictated by defect rank. In weakly admissible
settings, defect and filtration still exist but higher order propagation may
fail. The present paper works entirely in the strongly admissible regime. Later
papers in this series will show that weak admissibility leads to additional
structural collapse phenomena not visible at the level of defect.

\medskip

The first nontrivial example of APT arises from quadratic phases over the
radical ring \( R = \mathbb{F}_2[u] / (u^2) \). In this setting, defect appears
precisely at second additive order, all third order additive derivatives vanish,
the algebraic Phase has a canonical finite filtration of depth two, and no
further extension is possible without violating the axioms. This model plays a
foundational role. It is the simplest example in which defect, filtration,
functoriality, finite termination, and structural boundaries all occur
intrinsically. The axioms of APT are therefore not imposed externally but are
forced by the behaviour already visible in this model.

\medskip

Analytic phase phenomena are often organized through higher order structures
such as nilmanifolds in ergodic theory and nilspaces in additive combinatorics,
arising through inverse theorems for uniformity norms
\cite{Gowers2001Fourier,HostKra2005Nilmanifolds,GreenTaoZiegler2012Inverse,
CamarenaSzegedy2010Nilspaces}. Related algebraic structures appear in the theory
of polynomial and nilpotent actions \cite{BergelsonLeibman1996Polynomial}. APT
does not replace these theories. Instead, it isolates the intrinsic algebraic
content common to them, independent of analytic or geometric realization. From
this point of view, objects such as nilmanifolds or representation categories may
be regarded as realizations of underlying algebraic phase structures rather than
as fundamental objects in their own right.

\medskip

The main results of this paper establish Algebraic Phase Theory in the strongly
admissible regime. We show that defect data canonically induce a finite
filtration for any strongly admissible algebraic Phase, and that finite
termination forces a sharp structural boundary. In the radical quadratic model,
this boundary occurs at quadratic depth and cannot be extended.

\medskip

This work is structural rather than classificatory. We do not attempt to
classify all algebraic Phases arising from analytic behaviour or to develop
their representation theory. Instead, we isolate the minimal axiomatic
conditions under which phase interactions admit a coherent, functorial, and
finitely controlled algebraic realization, and we identify the obstructions that
arise beyond this regime.

\medskip

This paper is the first in a six part series developing Algebraic Phase Theory.
The subsequent papers \cite{GildeaAPT2,GildeaAPT3,GildeaAPT4,GildeaAPT5,GildeaAPT6}
treat weak admissibility, categorical and equivalence phenomena, boundary
calculus, and intrinsic reconstruction. The present work provides the strongly
admissible foundation upon which the remainder of the series is built.

\section{Basic Notions for Phase Extraction}

This section isolates the minimal pre-algebraic input from which Algebraic Phase
Theory operates. Rather than beginning with an algebraic structure, we specify
only the behavioural data required to describe phase interaction, prior to any
notion of defect, filtration, or completion.

The input data considered here is intentionally sparse. It encodes phase-like
behaviour and interaction but carries no a priori defect stratification,
canonical filtration, or algebraic completeness. All such structure is extracted
intrinsically at later stages of the theory.

\paragraph{Informal description.}
A phase datum consists of an additive object, a family of bounded-degree phase
functions or phase-like behaviours on it, and a specified interaction law that
governs how these behaviours combine. Concrete analytic realisations may be
introduced later, for example modulation and translation systems, Pauli or Weyl
operators, radical polynomial expressions over finite rings, or character-based
constructions, but none of these belong to the minimal input.

\subsubsection*{Analytic admissibility}

The analytic phenomena from which phase data may be extracted are intentionally
broad. Any analytic system that exhibits finite-degree phase behaviour,
functorially detectable interaction patterns, and finite termination is
admissible as input. Different analytic sources may give rise to the same
abstract phase behaviour, and Algebraic Phase Theory is designed to record only
this behavioural core.

Examples of admissible analytic sources include modulation and translation
systems in which failure of rigidity is detected through commutators,
polynomial phases over finite or nilpotent rings detected by additive
derivatives, Weyl or Pauli operator systems with controlled commutation
relations, character twists, and cocycle-type behaviour in additive
combinatorics or ergodic theory. These examples fall naturally into two
regimes. In the weakly admissible regime, phase behaviour is finitely
detectable and the interaction law is functorially meaningful, although one may
lack a faithful analytic operator model. In the strongly admissible regime, the
analytic system admits a concrete realisation, for example by operators on a
Hilbert space, whose commutation behaviour detects defect, depth, and
filtration in a functorial and faithful manner.

\begin{remark}
The collection $\Phi$ belongs to the input data for phase extraction and not to
the algebraic phase itself. Introducing $\Phi$ too early would force premature
commitments regarding the notion of phase, the correct algebraic interaction
law, or the way degree and defect should be detected. At this initial stage we
do not assume whether phases should be viewed as functions, operators, or
equivalence classes, nor do we assume any particular analytic mechanism for
their interaction. The purpose of keeping the analytic input flexible is to
separate raw phase behaviour from the algebraic structure that will later be
extracted from it.
\end{remark}

The aim of this section is therefore not to impose analytic, topological, or
representational structure, but rather to isolate the minimal information
required to speak meaningfully about phase interaction and its algebraic shadow.

\subsubsection*{Additive derivatives and degree}

Let $A$ be an abelian group (or an $R$-module) and let $\phi:A\to R$ be a
function.  For $h\in A$ define the \emph{additive difference operator}
\[
(\Delta_h\phi)(x):=\phi(x+h)-\phi(x).
\]
For $h_1,\dots,h_k\in A$ define the $k$-fold additive derivative inductively by
\[
\Delta_{h_1,\dots,h_k}\phi := \Delta_{h_k}\cdots \Delta_{h_1}\phi .
\]
We say that $\phi$ has \emph{degree at most $d$} if
\[
\Delta_{h_1,\dots,h_{d+1}}\phi \equiv 0
\qquad\text{for all }h_1,\dots,h_{d+1}\in A.
\]
Equivalently, all $(d+1)$-fold additive derivatives vanish identically.

\begin{remark}
The additive difference operator $\Delta_h$ is canonical, translation-invariant,
and functorial.  Any notion of degree or defect compatible with the axioms of
Algebraic Phase Theory is equivalent to one defined via iterated additive
differences.
\end{remark}

\begin{lemma}
Let $A$ be an abelian group, $R$ an abelian group, and let $\phi:A\to R$ be a
function. For $h_1,\dots,h_k\in A$ one has
\[
\Delta_{h_1,\dots,h_k}\phi(x)
=
\sum_{\epsilon\in\{0,1\}^k}
(-1)^{\,k-|\epsilon|}
\,
\phi\!\left(
x+\sum_{i=1}^k \epsilon_i h_i
\right).
\]
\end{lemma}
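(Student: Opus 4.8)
The plan is to proceed by induction on $k$, the number of shift parameters. The base case $k=1$ is immediate: $\Delta_{h_1}\phi(x) = \phi(x+h_1) - \phi(x)$, which is exactly the right-hand side summed over $\epsilon \in \{0,1\}$, with signs $(-1)^{1-1}=+1$ attached to $\phi(x+h_1)$ and $(-1)^{1-0}=-1$ attached to $\phi(x)$. For the inductive step, I would assume the formula holds for $k-1$ shifts and apply $\Delta_{h_k}$ to both sides, using the defining relation $\Delta_{h_1,\dots,h_k}\phi = \Delta_{h_k}(\Delta_{h_1,\dots,h_{k-1}}\phi)$.

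The key computation is then to expand $\Delta_{h_k}$ applied to the inductive sum. Writing $\psi = \Delta_{h_1,\dots,h_{k-1}}\phi$, we have $\Delta_{h_k}\psi(x) = \psi(x+h_k) - \psi(x)$. Substituting the inductive expression for $\psi$ at the arguments $x+h_k$ and $x$, the first term contributes $\sum_{\epsilon'\in\{0,1\}^{k-1}} (-1)^{(k-1)-|\epsilon'|} \phi(x + h_k + \sum_{i=1}^{k-1}\epsilon_i h_i)$, which corresponds exactly to the terms of the $k$-fold sum with $\epsilon_k = 1$ (noting $(k-1)-|\epsilon'| = k - |\epsilon|$ when $\epsilon_k=1$), and the second term contributes $-\sum_{\epsilon'\in\{0,1\}^{k-1}} (-1)^{(k-1)-|\epsilon'|}\phi(x+\sum_{i=1}^{k-1}\epsilon_i h_i)$, which gives the terms with $\epsilon_k = 0$ (since $-(-1)^{(k-1)-|\epsilon'|} = (-1)^{k-|\epsilon'|} = (-1)^{k-|\epsilon|}$ when $\epsilon_k = 0$). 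Reindexing by $\epsilon = (\epsilon', \epsilon_k) \in \{0,1\}^k$ combines the two pieces into the claimed formula.

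I do not expect a genuine obstacle here; the only point requiring mild care is the bookkeeping of exponents of $-1$ under the passage from $k-1$ to $k$, and in particular making sure the minus sign coming from $\Delta_{h_k}$ correctly accounts for the parity shift in the $\epsilon_k = 0$ block. One should also note that the inductive definition $\Delta_{h_1,\dots,h_k}\phi = \Delta_{h_k}\cdots\Delta_{h_1}\phi$ means the last-applied operator is $\Delta_{h_k}$, so peeling off $h_k$ (rather than $h_1$) is the natural move; since the difference operators $\Delta_h$ commute with one another, the symmetric-looking right-hand side is consistent with this, and no separate argument for symmetry in the $h_i$ is needed for the statement as written. The argument is purely formal and uses no structure on $R$ beyond its abelian group law, so it applies verbatim in the stated generality.
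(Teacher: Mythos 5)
Your proof is correct and follows essentially the same inductive argument as the paper: peel off the last-applied operator $\Delta_{h_k}$, substitute the inductive hypothesis for the two evaluations $\psi(x+h_k)$ and $\psi(x)$, and reindex the two resulting sums over $\{0,1\}^{k-1}$ as the $\epsilon_k=1$ and $\epsilon_k=0$ blocks of a single sum over $\{0,1\}^k$, with the same sign bookkeeping. The paper phrases the step as $k\to k+1$ rather than $k-1\to k$, but that is only cosmetic.
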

\begin{proof}
We argue by induction on $k$. For $k=1$ the identity is exactly the definition:
\[
\Delta_{h_1}\phi(x)=\phi(x+h_1)-\phi(x)
=
(-1)^{1-0}\phi(x)+(-1)^{1-1}\phi(x+h_1).
\]

Assume the formula holds for some $k\ge 1$. Set $H=(h_1,\dots,h_k)$ and write
\[
\Delta_{H,h_{k+1}}\phi(x)
=
\Delta_{h_{k+1}}\bigl(\Delta_H\phi\bigr)(x)
=
\Delta_H\phi(x+h_{k+1})-\Delta_H\phi(x).
\]
By the induction hypothesis,
\[
\Delta_H\phi(x+h_{k+1})
=
\sum_{\epsilon\in\{0,1\}^k}
(-1)^{\,k-|\epsilon|}
\,
\phi\!\left(
x+h_{k+1}+\sum_{i=1}^k \epsilon_i h_i
\right),
\]
and
\[
\Delta_H\phi(x)
=
\sum_{\epsilon\in\{0,1\}^k}
(-1)^{\,k-|\epsilon|}
\,
\phi\!\left(
x+\sum_{i=1}^k \epsilon_i h_i
\right).
\]
Subtracting gives
\[
\Delta_{H,h_{k+1}}\phi(x)
=
\sum_{\epsilon\in\{0,1\}^k}
(-1)^{\,k-|\epsilon|}
\left[
\phi\!\left(
x+h_{k+1}+\sum_{i=1}^k \epsilon_i h_i
\right)
-
\phi\!\left(
x+\sum_{i=1}^k \epsilon_i h_i
\right)
\right].
\]
Now identify $\{0,1\}^{k+1}$ with pairs $(\epsilon,\epsilon_{k+1})$ where
$\epsilon\in\{0,1\}^k$ and $\epsilon_{k+1}\in\{0,1\}$. The bracketed difference
is exactly the contribution of the two choices $\epsilon_{k+1}=1$ and
$\epsilon_{k+1}=0$, with signs
\[
(-1)^{\,k-|\epsilon|}
=
(-1)^{\,(k+1)-(|\epsilon|+1)}
\quad\text{and}\quad
-(-1)^{\,k-|\epsilon|}
=
(-1)^{\,(k+1)-|\epsilon|}.
\]
Thus the subtraction recombines into the single sum
\[
\Delta_{h_1,\dots,h_{k+1}}\phi(x)
=
\sum_{\eta\in\{0,1\}^{k+1}}
(-1)^{\,(k+1)-|\eta|}
\,
\phi\!\left(
x+\sum_{i=1}^{k+1} \eta_i h_i
\right),
\]
which is the desired formula for $k+1$.
\end{proof}

\subsubsection*{Defect tensors and defect degree}

The \emph{defect} of a function $\phi:A\to R$ is the obstruction to additivity
measured by its iterated additive derivatives.  Concretely, defect is detected
by the first nonvanishing additive derivative of $\phi$. For a function $\phi:A\to R$ of degree $\le d$, define its \emph{defect degree} by
\[
\deg_{\Def}(\phi)
:=
\min\Bigl\{k\ge 1:\exists\,h_1,\dots,h_k\in A\ \text{with}\
\Delta_{h_1,\dots,h_k}\phi\not\equiv 0\Bigr\},
\]
with the convention $\deg_{\Def}(\phi)=0$ if $\phi$ is additive. The associated \emph{defect tensor} is the family
\[
\Def(\phi)
:=
\Bigl\{\Delta_{h_1,\dots,h_k}\phi(0)\ :\ k=\deg_{\Def}(\phi)\Bigr\}.
\]

\begin{remark}
In finite ring realizations, defect values often lie in the coefficient ring $R$
and are stratified by its radical filtration.  In such cases, the nilpotence
length of $\rad(R)$ bounds the number of distinct defect layers that can be
detected.  This phenomenon reflects the realization, not the abstract phase
structure.
\end{remark}

\subsubsection*{Functorial pullback of phases}

Let $\mathsf{C}$ be a chosen base category of additive objects.  For a morphism
$f:A\to A'$ and a phase $\phi':A'\to R$, define the pullback
\[
f^\ast(\phi') := \phi'\circ f .
\]
This defines a contravariant functorial action of $\mathsf{C}$ on phase data.

\subsubsection*{Weak and strong admissibility}

\begin{definition}\label{def:weakly-admissible-phase-datum}
A \emph{weakly admissible phase datum} consists of a triple $(A,\Phi,\circ)$ such
that:
\begin{enumerate}[label=(W\arabic*)]
\item $A$ is an object of $\mathsf{C}$ and $\Phi(A)$ is a functorial family of
phases $\phi:A\to R$;
\item all phases have uniformly bounded additive degree;
\item the interaction law $\circ$ is specified abstractly and admits functorial
detection of defect and failure of closure at finite depth.
\end{enumerate}
No operator realization, character theory, or analytic structure is assumed.
\end{definition}

\begin{definition}\label{def:admissible-phase-datum}
A \emph{strongly admissible phase datum} is a weakly admissible phase datum
together with a faithful operator realization of phases that detects defect,
filtration, and rigidity functorially.
\end{definition}

\begin{remark}
Strong admissibility is a rigidity enhancement of weak admissibility.
Every strongly admissible phase datum is, by definition, weakly admissible, but
the converse need not hold.

Weak admissibility captures the intrinsic algebraic content of phase interaction
and defect detection, ensuring that defect and filtration are functorially
detectable at finite depth.  However, it does not force uniqueness of defect
propagation or closure: additional interaction data may exist whose behaviour is
not uniquely determined by the axioms.

Conceptually, a weakly admissible phase datum should be viewed as consisting of
two layers:
\begin{enumerate}
\item a canonically determined defect--controlled regime, in which propagation
and interaction are rigid and functorially forced; and
\item additional extension data beyond this regime, whose propagation may fail
or become choice dependent at greater depth.
\end{enumerate}

In strongly admissible regimes, boundary depth is therefore controlled entirely
by defect rank.  By contrast, weak admissibility allows \emph{boundary slippage}:
canonical propagation may fail strictly beyond the defect--generated layers.
Such extension phenomena are the intrinsic source of boundary effects and are
analyzed systematically later in the series, where the canonical strongly
admissible core and its universal property are established at the level of
algebraic Phases.
\end{remark}

\subsubsection*{Operator realizations via generating characters}

One important class of strongly admissible phase data arises from operator
realizations over finite rings.  Here phases are represented as character
weighted multiplication operators on the function space
$\mathcal{H}(A)=\Fun(A,\mathbb{C})$, where a generating additive character
$\chi:R\to\mathbb{C}^\times$ serves as the analytic interface between the
coefficient ring and the operator model.  This construction yields a faithful
realization in which additive differences, defect values and phase interactions
are detected functorially and with maximal resolution.  It therefore provides a
canonical analytic embodiment of the strongly admissible regime. Let $R$ be a finite ring and fix a generating additive character
\[
\chi:R\to\mathbb{C}^\times .
\]
Write $\mathcal{H}(A):=\Fun(A,\mathbb{C})$.  To each phase $\phi:A\to R$ associate
the phase multiplication operator
\[
(M_\phi f)(x):=\chi(\phi(x))\,f(x).
\]

\begin{remark}\label{rem:frobenius-necessity}
For finite rings, the existence of a generating additive character is equivalent
to the ring being Frobenius.  Consequently, whenever phases are realized via
characters as above, the base ring is automatically Frobenius.

This condition is not imposed by Algebraic Phase Theory itself.  Rather, it is a
sufficient hypothesis guaranteeing a faithful operator realization with maximal
defect detectability.  Weak admissibility does not require Frobenius duality, and
phases with delayed or higher-order boundary phenomena need not admit such a
realization.
\end{remark}

\medskip
The abstract notions introduced here are intentionally sparse.  In subsequent
sections we examine both strongly admissible examples, where defect detection is
maximal, and weakly admissible phases in which boundary phenomena appear beyond
the depth detected by defect alone.

\section{Radical Phase Geometry}

Before introducing the radical quadratic model, we record an immediate
structural consequence of the admissible input data.
Over finite rings, the requirement that phase functions be realized via a
\emph{generating} additive character already forces strong algebraic
restrictions on the base ring.

We now turn to the minimal nontrivial example of an algebraic Phase.
This example is not illustrative but foundational: all axioms of Algebraic Phase
Theory are forced by the phenomena encountered here.
It exhibits intrinsic defect, a canonical finite filtration, and a sharp
quadratic structural boundary.
All subsequent abstraction in this paper is driven by the behaviour observed in
this model.

\begin{remark}
In this section we move between analytic and algebraic descriptions.  
Analytically, a phase is a function $\phi:A\to R$ whose additive derivatives
record degree and defect.  Algebraically, the same phase is realized as an
operator $M_\phi$ acting on $\mathcal H(A)$, and defect controls how such
operators interact under composition.  The radical quadratic model is the first
setting in which these two viewpoints coincide.  Analytic defect computations
force the algebraic filtration, and the algebraic interaction law reflects the
analytic degree.  This dual description is an essential feature of the model.
\end{remark}

Let
\[
R := \mathbb{F}_2[u]/(u^2)
\]
be the finite Frobenius ring equipped with its canonical generating additive
character
\[
\chi(a+ub) := (-1)^{b} .
\]
The Jacobson radical $\rad(R)=(u)$ is nontrivial and satisfies $\rad(R)^2=0$.
For each $n\ge 1$, set
\[
\mathcal H(R^n) := \Fun(R^n,\mathbb{C}),
\]
the space of complex-valued functions on the finite $R$-module $R^n$.
A function $\phi:R^n\to R$ is called a \emph{quadratic phase} if its polarization
\[
\mathsf{B}_\phi(x,y)
:= \phi(x+y)-\phi(x)-\phi(y)
\]
is biadditive.
Note that this notion is invariant under adding constants to $\phi$; in
particular, quadratic phases are quadratic functions up to additive constants.
To each quadratic phase $\phi$ we associate the phase multiplication operator
\[
(M_\phi f)(x) := \chi(\phi(x))\,f(x),
\qquad f\in\mathcal H(R^n).
\]

In addition, we consider the translation operators
\[
(T_a f)(x) := f(x+a),
\qquad a\in R^n .
\]

\subsection*{Extraction of admissible phase data}

We now make explicit how the preceding construction fits into the general
framework of admissible phase data.  This identification isolates exactly
which features of the example are structural and which are incidental.

\medskip

\noindent
\textbf{Underlying additive object.}
We take
\[
A := R^n,
\]
viewed as a finite $R$-module and hence as an object of the base category
$\mathsf{C}$ of finite additive groups (or finite $R$-modules).

\medskip

\noindent
\textbf{Phases.}
Let $\Phi(A)$ denote the collection of all quadratic functions
\[
\phi : A \to R
\]
whose polarization $\mathsf{B}_\phi$ is biadditive.  Equivalently, $\Phi(A)$
consists of all functions of additive degree at most $2$ in the sense of
iterated additive differences.

\medskip

\noindent
\textbf{Uniform bounded degree.}
Every $\phi\in\Phi(A)$ satisfies
\[
\Delta_{h_1,h_2,h_3}\phi \equiv 0,
\]
so the family $\Phi(A)$ has uniformly bounded degree with $d=2$.

\begin{example}[Explicit additive derivatives for a sharp quadratic phase]
\label{ex:explicit-derivatives}
Let $R=\mathbb{F}_2[u]/(u^2)$ and $A=R^2$.  Define
\[
\phi:A\to R,\qquad \phi(x_1,x_2)=u\,x_1x_2.
\]
Fix $x=(x_1,x_2)\in A$ and increments
\[
h=(h_1,h_2),\qquad k=(k_1,k_2),\qquad \ell=(\ell_1,\ell_2)\in A.
\]

\medskip
\noindent\textbf{First additive derivative.}
By definition,
\[
(\Delta_h\phi)(x)=\phi(x+h)-\phi(x).
\]
Compute each term:
\[
\phi(x+h)=u(x_1+h_1)(x_2+h_2)
=u(x_1x_2+x_1h_2+h_1x_2+h_1h_2),
\]
and
\[
\phi(x)=u x_1x_2.
\]
Subtracting gives
\[
(\Delta_h\phi)(x)
=u(x_1h_2+h_1x_2+h_1h_2).
\]

\medskip
\noindent\textbf{Second additive derivative.}
By definition,
\[
(\Delta_{h,k}\phi)(x)
=(\Delta_h\phi)(x+k)-(\Delta_h\phi)(x).
\]
Using the formula above,
\[
(\Delta_h\phi)(x+k)
=u\bigl((x_1+k_1)h_2+h_1(x_2+k_2)+h_1h_2\bigr).
\]
Subtracting,
\[
(\Delta_{h,k}\phi)(x)=u(k_1h_2+h_1k_2),
\]
which is independent of $x$.

\medskip
\noindent\textbf{Third additive derivative.}
Since $(\Delta_{h,k}\phi)(x)$ is constant,
\[
(\Delta_{h,k,\ell}\phi)(x)=0
\qquad\text{for all }x,h,k,\ell\in A.
\]

\medskip
\noindent\textbf{Conclusion.}
The second derivative is nonzero for suitable increments, while all third
additive derivatives vanish identically.  Therefore $\phi$ has additive degree
exactly $2$.
\end{example}

\medskip

In the radical phase model over $R=\mathbb{F}_2[u]/(u^2)$, defect is inherently
quadratic: the first nontrivial obstruction to additivity appears at second
order, while all higher additive derivatives vanish identically.  Thus every
non additive quadratic phase in $\Phi(A)$ has defect degree exactly $2$.

\medskip

\noindent
\textbf{Operator realization.}
Fixing the generating additive character
\[
\chi : R \to \mathbb{C}^\times,
\]
each phase $\phi\in\Phi(A)$ acts on $\mathcal H(A)=\Fun(A,\mathbb{C})$ by the
phase multiplication operator $M_\phi$.  In addition, translations $T_a$ act
by pullback along addition in $A$.

\medskip

\noindent
\textbf{Interaction law.}
The interaction law $\circ$ is taken to be operator composition in
$\End(\mathcal H(A))$.  The algebraic phase model $\mathcal P$ is generated by
all operators $M_\phi$ and $T_a$ and closed under this operation.

\begin{remark}
The filtration on the algebraic Phase is not chosen or added. It is forced by
the admissible analytic behaviour. Each analytic phase $\phi$ has a defect
degree determined by the first nonvanishing additive derivative, measured by the
operators $\Delta_{h_1,\dots,h_k}$. When such a phase is realised as an operator
$M_\phi$, the same defect degree determines how far the corresponding operator
lies from the rigid component of the algebraic Phase. Closure under composition
cannot create deeper defect than the analytic behaviour permits, since every
commutation step ultimately reduces to additive differences of $\phi$ and these
differences vanish beyond the analytic defect degree. Because the analytic
defect terminates at finite depth, the algebraic filtration terminates at the
same depth. The filtration is therefore an intrinsic invariant determined
uniquely by the analytic phase behaviour.
\end{remark}

\medskip

\noindent
\textbf{Functoriality.}
For any additive homomorphism
\[
f : R^n \to R^m,
\]
pullback defines a map
\[
f^\ast : \Phi(R^m) \to \Phi(R^n), \qquad
f^\ast(\phi) := \phi \circ f.
\]
Additive derivatives commute with pullback, and degree is preserved.  Thus the
assignment $A \mapsto \Phi(A)$ is functorial.

\medskip

\noindent
\textbf{Conclusion.}
The data
\[
(A,\Phi,\circ)
\]
associated to quadratic phases over $R=\mathbb{F}_2[u]/(u^2)$ therefore form an
admissible phase datum in the sense of
Definition~\ref{def:admissible-phase-datum}.

\medskip
The behaviour observed in the radical quadratic model motivates a general
axiomatic framework.  In the next section we abstract these phenomena into a
small set of structural axioms, designed to capture exactly what is essential
and nothing more.

\begin{corollary}
\label{cor:finite-rings-frobenius}
Let $R$ be a finite ring.  If $R$ admits admissible phase data in the sense of
Definition~\ref{def:admissible-phase-datum} whose operator realization uses a
\emph{generating} additive character $\chi:R\to\C^\times$, then $R$ is a
Frobenius ring.
\end{corollary}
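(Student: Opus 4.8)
The plan is to reduce the claim to the classical characterization of finite Frobenius rings via generating characters, making precise the assertion in Remark~\ref{rem:frobenius-necessity}. Recall that a finite (unital) ring $R$ is Frobenius precisely when the group $\widehat R$ of additive characters of $R$ is isomorphic to $R$ as a left $R$-module. The strategy is to exhibit such an isomorphism directly from $\chi$. Equip $\widehat R$ with the left $R$-module structure $(r\cdot\psi)(x):=\psi(xr)$ and consider
\[
\Lambda : R \longrightarrow \widehat R,\qquad \Lambda(r)(x):=\chi(xr).
\]
A short check shows $\Lambda$ is a homomorphism of left $R$-modules, and the whole proof is the assertion that $\Lambda$ is bijective.

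First I would note that $|\widehat R|=|R|$, since $R^{+}$ is a finite abelian group and every finite abelian group is (non-canonically) isomorphic to its character group; hence it suffices to prove $\Lambda$ injective. Next I would identify
\[
\ker\Lambda=\{\,r\in R:\chi(xr)=1\ \text{for all }x\in R\,\},
\]
and observe that this is a left ideal of $R$ (if $\chi(xr)=1$ for all $x$ then $\chi(x\,sr)=\chi((xs)r)=1$ for all $x$ and all $s\in R$) contained in $\ker\chi$ (take $x=1$); in fact it is the largest left ideal of $R$ contained in $\ker\chi$. The hypothesis that $\chi$ is a \emph{generating} additive character is exactly the statement that this ideal is zero, equivalently that $R\cdot\chi=\widehat R$. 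With the cardinality count this forces $\Lambda$ to be an isomorphism of left $R$-modules, so $\widehat R\cong{}_RR$ and $R$ is Frobenius.

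I would then explain why the generating condition is forced by the admissible input rather than added to it, since this is the real point of the corollary. Among the phases $A\to R$ with $A=R$ one has the additive phases $\phi_a:x\mapsto xa$ for $a\in R$, which are pairwise distinct as functions; their operator realizations are the multiplication operators $(M_{\phi_a}f)(x)=\chi(xa)f(x)=\Lambda(a)(x)\,f(x)$, so $M_{\phi_a}=M_{\phi_b}$ if and only if $R(a-b)\subseteq\ker\chi$. A \emph{faithful} operator realization must distinguish these phases, hence $\ker\chi$ can contain no nonzero left ideal $Ra$, and therefore no nonzero left ideal at all. This is precisely the generating condition used above; note that the argument consumes only degree-one phase data, so it applies to every admissible phase datum containing the linear phases, in particular to the radical quadratic model.

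The only genuinely nontrivial ingredient is the standard characterization of finite Frobenius rings by the existence of a generating character (equivalently, by $\widehat R\cong{}_RR$ as left $R$-modules); a reference to this fact should be cited here. The main care point is the left/right bookkeeping: the operator $M_\phi$ multiplies by $\chi(\phi(x))$, so one must choose whether the witnessing linear phases are $x\mapsto xa$ or $x\mapsto ax$, and correspondingly use the left- or right-module structure on $\widehat R$, keeping this consistent throughout. Both one-sided versions are available and, for finite rings, equivalent, but the write-up must commit to one.
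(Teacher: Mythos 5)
Your proof is correct and follows essentially the same route as the paper: both reduce the corollary to the classical characterization of finite Frobenius rings by generating additive characters, with Wood and Honold as the cited input. The paper's own proof is a bare two-sentence citation; you usefully unpack one direction by constructing the left $R$-module map $\Lambda(r)(x)=\chi(xr)$, identifying $\ker\Lambda$ as the largest left ideal inside $\ker\chi$, and reading off the isomorphism $\widehat R\cong{}_R R$ from the cardinality count. Your additional paragraph observing that the faithfulness requirement of strong admissibility already \emph{forces} the generating condition (distinct linear phases $x\mapsto xa$ must yield distinct operators $M_{\phi_a}$, so $\ker\chi$ cannot contain a nonzero left ideal) goes beyond what the paper records, which simply hypothesizes the generating property; this is a genuine sharpening consistent with the remark following the corollary, and the left/right bookkeeping caveat is well placed.
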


\begin{proof}
By Axiom~IV, the operator realization of an admissible Phase uses a fixed
additive character $\chi:R\to\C^\times$. By hypothesis, this character is \emph{generating}, meaning that $\ker(\chi)$
contains no nonzero left or right ideal of $R$.

It is a standard characterization of finite Frobenius rings that a finite ring
admits a generating additive character if and only if it is Frobenius; see for
example \cite{Wood1999Duality,Honold2001QuasiFrobenius}.  Therefore $R$ is a
Frobenius ring.
\end{proof}

\begin{remark}
This corollary shows that Frobenius duality is not an auxiliary assumption of
Algebraic Phase Theory but a structural consequence of the admissible input
data.  In particular, any finite-ring phase model admitting a generating
character necessarily lies within the class of Frobenius rings.
For background on Frobenius rings and character theory, see
\cite{Wood1999Duality,Honold2001QuasiFrobenius,GreferathSchmidt2000Characters}.
\end{remark}

\medskip

\section{Algebraic Phase Theory: Structural Axioms}

For notational convenience, we summarise an algebraic phase model by the pair
\[
(\mathcal P,\circ),
\]
where $\mathcal P$ denotes the underlying algebraic structure encoding phase
interactions and $\circ$ denotes the distinguished algebraic operations.  
Defect and filtration are not auxiliary data.  They are invariants extracted
from the controlled failure of strict algebraic closure under $\circ$.  
All defect information, canonical filtrations, and intrinsic notions of
complexity are therefore \emph{not imposed as additional structure}.  
Under the standing hypotheses of Algebraic Phase Theory, they are functorially
and uniquely determined by the interaction laws encoded by $(\mathcal P,\circ)$.
Once $(\mathcal P,\circ)$ is fixed, there are no further choices involved in
defining defect, filtration, or complexity, up to canonical equivalence.

Thus the pair $(\mathcal P,\circ)$ introduces no extra data; it is simply a
compact way to refer to the full algebraic phase structure determined by these
operations.  Throughout this paper all algebraic Phases are assumed to satisfy
the following axioms.  These axioms are not arbitrary: each one formalises a
feature that arises unavoidably in the radical quadratic model, and together
they describe the minimal structural conditions under which phase interactions
admit a coherent algebraic description.

\subsection*{Axiom I: Detectable Action Defects}

The interaction law $\circ$ canonically determines a defect degree for each
element of $\mathcal P$, functorially and uniquely. Each algebraic Phase therefore carries a canonical finite filtration
\[
\mathcal{P}_0 \subset \mathcal{P}_1 \subset \cdots \subset \mathcal{P}_d = \mathcal{P},
\]
where $\mathcal{P}_k$ consists of all elements of defect degree at most $k$.
This filtration is intrinsic to $\mathcal{P}$ and is determined entirely by the
defect. Here $d$ denotes the maximal defect degree, so $k$ ranges from $0$ to $d$.

\subsection*{Axiom II: Canonical Algebraic Realization}

The analytic behaviour determines a unique algebraic shadow: any two algebraic
Phases extracted from the same admissible data are canonically equivalent.

\subsection*{Axiom III: Defect-Induced Complexity}

The canonical filtration
\[
\mathcal{P}_0 \subset \mathcal{P}_1 \subset \cdots \subset \mathcal{P}_d
\]
is ordered by defect degree: an element lies in $\mathcal{P}_k$ if and only if
its defect degree is at most $k$.  Thus higher layers correspond exactly to
higher defect.

\subsection*{Axiom IV: Functorial Defect Structure}

Defect is functorial: it is preserved under pullback, under all phase morphisms,
and when passing between analytic behaviour and its algebraic shadow.

\subsection*{Axiom V: Finite Termination}

Defect degree is always finite.\\

\bigskip

We emphasize that Algebraic Phase Theory does not prescribe a unique concrete
realization of $(\mathcal P,\circ)$.  In practice, the same analytic phase
behaviour may admit several natural algebraic realizations, differing for
example in presentation, generating sets, or ambient categories. The axioms are designed to be insensitive to such choices.  Once a realization
$(\mathcal P,\circ)$ satisfies Axioms~I-V, all notions of defect, filtration,
layering, and finite termination are canonically determined and functorial.
Different realizations of the same underlying phase behaviour therefore yield
equivalent algebraic Phases in the sense relevant to this theory.

In particular, Algebraic Phase Theory is a theory of invariants and structural
boundaries, not of preferred models. Having stated the axioms abstractly, we now return to the radical quadratic
model and verify that it satisfies each axiom intrinsically. These axioms apply equally to Phases extracted from weakly or strongly
admissible phase data; admissibility governs the existence and detectability of
structure, not the internal behaviour once a Phase is formed.

\section{Verification of the Axioms in the Radical Phase Model}

We now verify that the radical quadratic phase model over 
$R=\mathbb{F}_{2}[u]/(u^{2})$ satisfies Axioms~I–V of Algebraic Phase Theory.

Let $\mathcal P$ denote the algebra generated by all translation operators
$T_a$ and all quadratic phase multiplication operators $M_\phi$ on
$\mathcal H(A)=\Fun(A,\C)$, closed under composition.  The operation
$\circ$ is composition.

\subsection*{Axiom I: Detectable Action Defects}

In this model, the failure of strict rigidity under composition is detected by
the polarization of a quadratic phase,
\[
\mathsf{B}_\phi(x,y)=\phi(x+y)-\phi(x)-\phi(y).
\]
This form vanishes for linear phases and takes values in the radical $\rad(R)$
precisely when non-rigidity appears.  Thus the interaction law $\circ$ assigns
to each operator a defect degree measuring the first nonvanishing layer of
polarization, and the resulting filtration is exactly
\[
\mathcal P_0 \subset \mathcal P_1 \subset \mathcal P_2=\mathcal P.
\]
Hence Axiom~I holds.

\subsection*{Axiom II: Canonical Algebraic Realization}

The algebraic Phase $\mathcal P$ is obtained functorially from the admissible
quadratic phase data $(A,\Phi)$ via phase multiplication and translation.
Any other algebraic realization of the same analytic behaviour produces the
same operators up to Phase equivalence.  Thus the analytic behaviour determines
a unique algebraic shadow, verifying Axiom~II.

\subsection*{Axiom III: Defect-Induced Complexity}

In the radical model, the defect degree determined above coincides with the
radical depth at which the polarization of a phase becomes nonzero.  This degree
organises $\mathcal P$ into the filtration
\[
\mathcal P_0 \subset \mathcal P_1 \subset \mathcal P_2,
\]
so complexity is completely ordered by defect degree, as required by
Axiom~III.

\subsection*{Axiom IV: Functorial Defect Structure}

Polarization commutes with pullback along homomorphisms
$A\to A'$, and defect degree is preserved under conjugation by translations
and under all admissible morphisms of phases.  Thus defect behaves functorially,
verifying Axiom~IV.

\subsection*{Axiom V: Finite Termination}

Since $\rad(R)^2=0$, polarization cannot produce nonzero values beyond radical
depth two.  Hence defect degree is always finite and the filtration stabilises at
level $2$.  This verifies Axiom~V.

\subsection*{Uniqueness of Defect up to Equivalence}

One may ask whether different ways of measuring non-rigidity could lead to
genuinely different defect invariants in this model.  For example, one could try
to define defect using commutators, higher additive derivatives, or deviations
from exact character behaviour.  Each such construction captures some aspect of
non-rigid interaction. In the radical quadratic phase model these alternatives do not produce distinct
notions.

\medskip
\noindent
\textbf{Key observation.}
\emph{Every functorial defect invariant compatible with finite termination
coincides, up to canonical equivalence, with polarization modulo the Jacobson
radical.}

Any admissible defect invariant must be functorial under automorphisms of $A$,
vanish on rigid phases, detect non-rigidity in composition, and respect the fact
that $\rad(R)^2 = 0$ imposes finite termination.  These requirements force all
candidates to collapse to the same two step hierarchy:
\[
\text{rigid} \quad < \quad \text{non-rigid modulo }\rad(R)
\quad < \quad \text{purely radical}.
\]

Thus in this model all reasonable defect constructions induce the same canonical
filtration and the same defect degrees.  This is an instance of the general
\emph{defect equivalence principle}: different constructions are regarded as
equivalent when they yield the same induced filtration and defect degrees. Having verified Axioms~I–V, we now shift focus from individual operators to the
intrinsic invariants they generate.  In Algebraic Phase Theory it is these
invariants, namely defect, filtration, and structural boundary, that carry the
essential content.

\section{Phase Extraction and Minimality}

This section has two purposes.  
First, it explains how algebraic Phases arise from admissible analytic phase
data by a canonical extraction procedure.  This shows that the axioms of
Algebraic Phase Theory are not imposed externally but emerge directly from the
analytic behaviour of phases.

Second, it identifies the radical quadratic model as the smallest setting in
which genuinely nontrivial algebraic behaviour appears.  Below this level the
theory collapses to rigidity, while at this level a multi step canonical
filtration first emerges and terminates at finite depth.  These two themes are
developed in the subsections that follow.

\subsection*{Phase extraction}

This section describes how one passes from admissible analytic phase data to
an algebraic Phase.  The extraction procedure is canonical: it takes the given
analytic behaviour, packages it into an algebraic interaction law, and reveals
the invariants such as defect, filtration, and finite termination that are
already present.  Theorem \ref{thm:phase-extraction} formalises this
construction and shows that any admissible analytic input yields a unique
algebraic Phase satisfying Axioms I to V.

\begin{theorem}
\label{thm:phase-extraction} Let $(A,\Phi,\circ)$ be an strongly admissible phase datum in the sense of Definition~\ref{def:admissible-phase-datum}. Then one can construct, functorially in $A$, an algebraic Phase $(\mathcal P,\circ)$ realizing the given structural signature and satisfying Axioms~I-V of Algebraic Phase Theory. 
\end{theorem}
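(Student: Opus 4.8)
The plan is to build $\mathcal P$ directly out of the faithful operator realization that strong admissibility supplies, to define its defect filtration through iterated commutators with translations, and then to read Axioms~I–V together with functoriality off that construction.

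\textbf{Step 1: Construction of $(\mathcal P,\circ)$.} By Definition~\ref{def:admissible-phase-datum} the datum carries a faithful operator model in which each $\phi\in\Phi(A)$ acts on $\mathcal H(A)=\Fun(A,\C)$ by a phase operator $M_\phi$ and $A$ acts by translations $T_a$. I would set $\mathcal P=\mathcal P(A)$ to be the subalgebra of $\End(\mathcal H(A))$ generated by all $M_\phi$ and all $T_a$, with $\circ$ operator composition. Faithfulness ensures that, up to canonical isomorphism, $\mathcal P$ depends only on the abstract datum $(A,\Phi,\circ)$ and not on the particular model; this is exactly what Axiom~II will require.

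\textbf{Step 2: The defect filtration.} The decisive identity is the conjugation rule $T_hM_\phi T_h^{-1}=M_{S_h\phi}$, where $(S_h\phi)(x)=\phi(x+h)$, so that the derivation $D_h:=[\,\cdot\,,T_h]$ on $\End(\mathcal H(A))$ carries $M_\phi$ to an operator built from the additive difference $\Delta_h\phi$, while $D_h(T_a)=0$. Because additive differences are symmetric in their increments (by the explicit Lemma above) and, for suitable $h$, $\deg_{\Def}(\Delta_h\phi)=\deg_{\Def}(\phi)-1$, iterating $D_{h_1},\dots,D_{h_j}$ extracts the $j$-fold additive derivative $\Delta_{h_1,\dots,h_j}\phi$; by the uniform degree bound $d$ on $\Phi(A)$, all such derivatives of order $>d$ vanish. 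I would therefore define
\[
\mathcal P_k:=\bigl\{\,X\in\mathcal P:\ D_{h_{k+1}}\cdots D_{h_1}(X)=0\ \text{for all }h_1,\dots,h_{k+1}\in A\,\bigr\},
\]
an intrinsic condition on $(\mathcal P,\circ)$, and then verify: (i) $M_\phi\in\mathcal P_k$ iff $\deg_{\Def}(\phi)\le k$; (ii) $\mathcal P_0$ is spanned by translations and additive phases, on which commutators are scalars; (iii) the $\mathcal P_k$ form an increasing filtration with $\mathcal P_d=\mathcal P$. This simultaneously produces the canonical filtration of Axiom~I, its ordering by defect degree in Axiom~III, and finite termination in Axiom~V, and shows that defect degree is well defined independently of any presentation.

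\textbf{Step 3: Functoriality and the remaining axioms.} For a morphism $f:A\to A'$ in $\mathsf C$, the pullback $f^\ast\phi'=\phi'\circ f$ intertwines translations and phase operators and thus extends to an algebra homomorphism $f^\ast:\mathcal P(A')\to\mathcal P(A)$; since additive differences commute with pullback, $f^\ast$ preserves the commutator filtration, so defect degree is functorial, which is Axiom~IV, and $A\mapsto\mathcal P(A)$ is functorial as claimed. Axiom~II then follows from Step~1: any other algebraic realization extracted from the same datum is identified with $\mathcal P$ by the unique algebra map matching generators, and this identification respects the intrinsically defined $\mathcal P_k$. The main obstacle I foresee is the intrinsic well-definedness asserted in Step~2(i)–(iii): that a composite of generators cannot acquire defect deeper than its factors, i.e.\ that the commutator-annihilation description of $\mathcal P_k$ coincides with ``generated by elements of defect $\le k$.'' I expect to settle this by a filtered Leibniz argument: each $D_h$ is a derivation, so on a product $X_1\cdots X_r$ the number of iterated commutators needed to reach a scalar is at most $\sum_i(\text{depth of }X_i)$, which is finite because the underlying additive derivatives vanish beyond degree $d$. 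Closure under composition therefore cannot deepen defect — precisely the phenomenon described in the remark preceding the theorem — and the filtration terminates at the analytic degree $d$.
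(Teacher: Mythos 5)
Your Step 1 (generate $\mathcal P$ in $\End(\mathcal H(A))$ from the $M_\phi$ and the $T_a$, use faithfulness for well-definedness) and Step 3 (functoriality via $f^\ast M_{\phi'} = M_{\phi'\circ f}$, and $\Delta_h(\phi'\circ f)=(\Delta_{f(h)}\phi')\circ f$) track the paper's proof closely. Where you genuinely diverge is Step 2: the paper \emph{defines} $\mathcal P_k$ as the subalgebra generated by the $M_\phi$ with $\deg_{\Def}(\phi)\le k$ together with the rigid generators, then appeals to Axiom~III to bound defect degree of composites, whereas you attempt to give an \emph{intrinsic} characterization of $\mathcal P_k$ inside $\End(\mathcal H(A))$ via iterated commutators with translations. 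That is a more ambitious route, and it is where the argument breaks.

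The key identity you rely on is wrong. You compute $D_h:=[\,\cdot\,,T_h]$ on a phase operator as if it returned something governed by $\Delta_h\phi$, but in fact
\[
D_h(M_\phi)=M_\phi T_h - T_h M_\phi = \bigl(M_\phi - M_{S_h\phi}\bigr)T_h,
\]
and $(M_\phi - M_{S_h\phi})$ is multiplication by $\chi(\phi(x))-\chi(\phi(x+h))$, \emph{not} by $\chi(\Delta_h\phi(x))$, because $\chi$ is a multiplicative character and does not commute with additive differences. Iterating, one finds that $D_{h_k}\cdots D_{h_1}(M_\phi)$ is (up to a translation factor) multiplication by the $k$-fold additive difference of $\chi\circ\phi$, not $\chi$ of the $k$-fold difference of $\phi$. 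For a nontrivial \emph{additive} $\phi$ — which has $\deg_{\Def}(\phi)=0$ and must lie in $\mathcal P_0$ — one gets
\[
D_{h_2}D_{h_1}(M_\phi)=\chi(\phi(x))\bigl(1-\chi(\phi(h_1))\bigr)\bigl(1-\chi(\phi(h_2))\bigr)\cdot(\text{translation}),
\]
which is nonzero for suitable $h_1,h_2$, and indeed no number of iterations ever annihilates $M_\phi$. So your intrinsic $\mathcal P_k$ does not contain the additive phase operators at level $0$, does not match the defect-degree stratification, and does not terminate at depth $d$; the verification of Axioms~I, III, V collapses. What \emph{does} extract $\Delta_h\phi$ at the operator level is the group commutator $T_h M_\phi T_h^{-1}M_\phi^{-1}=M_{\Delta_h\phi}$, which uses the additivity of $\chi$ (via $M_\phi M_\psi = M_{\phi+\psi}$); but that map is not a derivation for $\circ$, so the ``filtered Leibniz'' bound in your final paragraph is unavailable in the form you need. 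To salvage the intrinsic route you would have to replace the Lie-commutator filtration by a group-commutator (Heisenberg/Weil-type) filtration and supply a different argument that composition cannot deepen defect — which is essentially the content the paper attributes to Axiom~III rather than proving directly.
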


\begin{proof}
Starting from strongly admissible phase data, we construct the associated algebraic Phase
and extract its intrinsic invariants, verifying Axioms~I, III, IV and~V.
Uniqueness up to canonical equivalence then follows from Axiom~II.

By strong admissibility (Definition~\ref{def:admissible-phase-datum}), the datum
$(A,\Phi,\circ)$ comes equipped with a faithful operator realization of phases
on a complex vector space $\mathcal H(A)$, in which the distinguished interaction
law $\circ$ is realized as the corresponding operation(s) on operators
(for example, composition in $\End_{\C}(\mathcal H(A))$), and in which defect,
filtration, and rigidity are detected functorially.

Let $\mathcal P(A)\subseteq \End_{\C}(\mathcal H(A))$ denote the smallest subset
containing the phase operators $\{M_\phi:\phi\in\Phi(A)\}$ (together with the
identity operator) and closed under the distinguished operation(s) encoded by
$\circ$.  By construction, $(\mathcal P(A),\circ)$ is closed under the specified
interaction and constitutes an algebraic Phase realizing the given phase data. This assignment is functorial.
For a morphism $f:A\to A'$ in the base category $\mathsf C$, strong admissibility
provides a functorial pullback action on phases $\Phi(A')\to\Phi(A)$,
$\phi'\mapsto \phi'\circ f$. Define
\[
f^\ast:\mathcal P(A')\to \mathcal P(A)
\]
on generators by
\[
f^\ast(M_{\phi'}) := M_{\phi'\circ f}.
\]
By admissibility axiom~\textup{(W1)}, the pullback $\phi'\circ f$ lies in $\Phi(A)$
whenever $\phi'\in\Phi(A')$, so $f^\ast$ is well defined.
Since $f^\ast$ respects the distinguished operation(s) $\circ$, it extends uniquely
to a morphism of algebraic Phases, and functoriality $(g\circ f)^\ast=f^\ast\circ g^\ast$
follows on generators and hence on all of $\mathcal P$.

By Axiom~II, any other algebraic realization of the same admissible phase data
is canonically equivalent to $(\mathcal P(A),\circ)$.  Thus the construction is
unique up to canonical equivalence within the framework of Algebraic Phase Theory.

For each phase $\phi\in\Phi(A)$, defect is measured analytically by the first
nonvanishing iterated additive derivative. The smallest order at which a derivative
does not vanish defines the \emph{defect degree} $\deg_{\Def}(\phi)$, and the
corresponding derivative values form the \emph{defect tensor} $\Def(\phi)$.
Defect vanishes exactly when $\phi$ is additive, identifying the rigid part of
the structure. Defect is functorial.
For a morphism $f:A\to A'$ and phase $\phi':A'\to R$,
\[
\Delta_h(\phi'\circ f)
=
(\Delta_{f(h)}\phi')\circ f.
\]
Thus defect degree and defect tensor are preserved under pullback. Therefore defect
supplies a canonical, functorial invariant measuring deviation from rigidity, as
required by Axioms~I and~IV.

Defect determines a canonical hierarchy in the algebraic Phase.
For each $k\ge 0$, let $\mathcal P_k(A)$ be the subset generated (under the
operation(s) $\circ$) by all $M_\phi$ with $\deg_{\Def}(\phi)\le k$, together with
any designated rigid generators (such as translations) if present.
This yields an increasing filtration
\[
\mathcal P_0(A)\subseteq\mathcal P_1(A)\subseteq\mathcal P_2(A)\subseteq\cdots\subseteq \mathcal P(A),
\]
called the \emph{canonical defect filtration}. Elements of $\mathcal P_0(A)$ are rigid,
and higher layers represent increasing non-rigidity. Functoriality of defect ensures
that for any morphism $f:A\to A'$,
\[
f^\ast(\mathcal P_k(A'))\subseteq \mathcal P_k(A).
\]
Thus defect induces a canonical stratification on $\mathcal P(A)$, minimal precisely
in the rigid regime, as required by Axiom~III.

By the uniform bounded degree hypothesis~\textup{(W2)}, there exists an integer $d$
such that every phase $\phi\in\Phi(A)$ has additive degree at most $d$.
Equivalently, each generator $M_\phi$ has defect degree at most $d$.
Since $\mathcal P(A)$ is generated under $\circ$ by these operators, every element
of $\mathcal P(A)$ lies in the $d$th filtration level:
\[
\mathcal P_d(A)=\mathcal P(A).
\]
Thus the canonical defect filtration terminates after finitely many steps. Finite
termination is therefore a structural consequence of the bounded degree hypothesis,
verifying Axiom~V.

Finally, the construction depends solely on the admissible phase data and the specified
interaction laws, so any two such realizations induce the same defect and canonical
filtration. In particular, $(\mathcal P(A),\circ)$ satisfies Axioms~I, III, IV and~V.
By Axiom~II it is then the unique algebraic Phase determined by the given admissible
phase data, up to canonical equivalence.
\end{proof}

\begin{theorem}
\label{thm:phase-extraction-weak}
Let $(A,\Phi,\circ)$ be a \emph{weakly admissible} phase datum in the sense of
Definition~\ref{def:weakly-admissible-phase-datum}. Then the additive derivative
calculus canonically determines, functorially in $A$, a defect degree function
$\deg_{\Def}:\Phi(A)\to\{0,1,\dots,d\}$ and defect tensors $\Def(\phi)$ for
$\phi\in\Phi(A)$, and hence a canonical increasing defect filtration
\[
\Phi_{\le 0}(A)\subseteq \Phi_{\le 1}(A)\subseteq \cdots \subseteq \Phi_{\le d}(A)=\Phi(A),
\qquad
\Phi_{\le k}(A):=\{\phi\in\Phi(A):\deg_{\Def}(\phi)\le k\}.
\]
This filtration is functorial under pullback: for any morphism $f:A\to A'$ in
$\mathsf C$ one has $f^\ast(\Phi_{\le k}(A'))\subseteq \Phi_{\le k}(A)$.
Moreover, the filtration terminates at depth $d$.
\end{theorem}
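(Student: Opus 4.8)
The plan is to prove this as the weakly admissible counterpart of Theorem~\ref{thm:phase-extraction}. Since a weakly admissible datum (Definition~\ref{def:weakly-admissible-phase-datum}) carries no faithful operator realization, I would not attempt to build an ambient algebra $\mathcal P$; the entire extraction is carried out at the level of the phase family $\Phi(A)$ using only the additive-difference calculus, which by the remarks preceding this section is canonical, translation-invariant and functorial. Concretely I would proceed in four steps: (i) read off $\deg_{\Def}$ and the defect tensors $\Def(\phi)$ directly from the iterated operators $\Delta_{h_1,\dots,h_k}$; (ii) establish functoriality of these data under pullback; (iii) assemble the filtration $\Phi_{\le k}(A)$ and deduce its functoriality from step (ii); (iv) obtain termination at depth $d$ from the uniform bounded-degree hypothesis (W2).

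For (i) I would, for each $\phi\in\Phi(A)$, set $\deg_{\Def}(\phi)$ equal to the least $k\ge 1$ for which some $k$-fold additive derivative $\Delta_{h_1,\dots,h_k}\phi$ is not identically zero, with the convention $\deg_{\Def}(\phi)=0$ when $\phi$ is additive, and let $\Def(\phi)$ be the family of values $\Delta_{h_1,\dots,h_k}\phi(0)$ at $k=\deg_{\Def}(\phi)$; these depend only on $\phi$ and on the operators $\Delta_h$, hence are intrinsic. For (ii), substituting into the explicit summation formula for iterated additive derivatives and using that a morphism $f:A\to A'$ in $\mathsf C$ is additive yields
\[
\Delta_{h_1,\dots,h_k}(\phi'\circ f)=\bigl(\Delta_{f(h_1),\dots,f(h_k)}\phi'\bigr)\circ f,
\qquad k\ge 1,\ h_1,\dots,h_k\in A .
\]
By axiom (W1) the pullback $\phi'\circ f$ lies in $\Phi(A)$, so this is meaningful; and it shows that if every $k$-fold derivative of $\phi'$ vanishes identically then so does every $k$-fold derivative of $\phi'\circ f$, while a composite of additive maps is additive. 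Hence $\deg_{\Def}(\phi'\circ f)\le\deg_{\Def}(\phi')$ and the defect tensor of $\phi'\circ f$ is the $f$-pullback of that of $\phi'$, i.e.\ defect is functorial. Functoriality $(g\circ f)^\ast=f^\ast\circ g^\ast$ of the pullback action on phases is immediate from associativity of composition.

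For (iii) I would put $\Phi_{\le k}(A):=\{\phi\in\Phi(A):\deg_{\Def}(\phi)\le k\}$; these subsets are nested by construction, and the inequality just obtained says precisely that $f^\ast(\Phi_{\le k}(A'))=\{\phi'\circ f:\deg_{\Def}(\phi')\le k\}\subseteq\Phi_{\le k}(A)$. For (iv), exactly as in the proof of Theorem~\ref{thm:phase-extraction}, the bounded-degree hypothesis (W2) supplies an integer $d$ such that every $\phi\in\Phi(A)$ has additive degree at most $d$; since the defect degree, being the order of the first nonvanishing iterated difference, cannot exceed the additive degree, $\deg_{\Def}(\phi)\le d$ for all $\phi$, whence $\Phi_{\le d}(A)=\Phi(A)$.

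The argument is essentially a repackaging of the additive-difference calculus, so I do not anticipate a serious obstacle; the one point to handle with care is the \emph{direction} of the pullback inclusion. Pullback along a non-injective or otherwise degenerate morphism $f$ can strictly lower the defect degree — a nonconstant quadratic phase may restrict to an additive or even identically zero phase — so the theorem asserts only the one-sided inclusion $f^\ast(\Phi_{\le k}(A'))\subseteq\Phi_{\le k}(A)$, never equality, and the convention $\deg_{\Def}=0$ on additive phases is exactly what keeps this consistent (composites of additive maps being additive). I would also emphasise the contrast with Theorem~\ref{thm:phase-extraction}: in the absence of a faithful operator model the extraction halts at these phase-level invariants — defect degree, defect tensors, and the filtration on $\Phi(A)$ — and does not produce an algebraic Phase $(\mathcal P,\circ)$ satisfying Axioms~I--V; that gap is precisely what separates weak from strong admissibility.
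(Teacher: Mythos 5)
Your proposal follows essentially the same route as the paper's proof: define $\deg_{\Def}$ and $\Def(\phi)$ intrinsically from iterated additive derivatives, derive functoriality from the identity $\Delta_h(\phi'\circ f)=(\Delta_{f(h)}\phi')\circ f$ (iterated), form the sublevel sets $\Phi_{\le k}$, and obtain termination at depth $d$ from the bounded-degree axiom (W2). Your treatment of the pullback inclusion is in fact slightly more careful than the paper's own, which says defect degree and defect tensors are ``preserved'' under pullback; as you correctly observe, pullback along a degenerate morphism can strictly lower defect degree, and what is both true and sufficient for the filtration claim is only the one-sided inequality $\deg_{\Def}(\phi'\circ f)\le\deg_{\Def}(\phi')$, yielding $f^\ast(\Phi_{\le k}(A'))\subseteq\Phi_{\le k}(A)$.
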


\begin{proof}
We begin by defining defect intrinsically using additive derivatives.
For a phase $\phi\in\Phi(A)$, define its \emph{defect degree} $\deg_{\Def}(\phi)$
to be the least integer $k\ge 1$ for which there exist increments
$h_1,\dots,h_k\in A$ such that
\[
\Delta_{h_1,\dots,h_k}\phi \not\equiv 0,
\]
with the convention that $\deg_{\Def}(\phi)=0$ if $\phi$ is additive.
At the defect degree $k=\deg_{\Def}(\phi)$, define the associated
\emph{defect tensor} $\Def(\phi)$ to be the family of values
\[
\Def(\phi)
:=
\{\Delta_{h_1,\dots,h_k}\phi(0)\ :\ h_1,\dots,h_k\in A\}.
\]

By the bounded degree hypothesis of weak admissibility, there exists an integer
$d$ such that
\[
\Delta_{h_1,\dots,h_{d+1}}\phi \equiv 0
\qquad\text{for all }\phi\in\Phi(A)\text{ and all }h_1,\dots,h_{d+1}\in A.
\]
It follows that $\deg_{\Def}(\phi)\le d$ for all $\phi\in\Phi(A)$, and hence that
the defect filtration terminates after finitely many steps at depth $d$. Functoriality of defect follows from the basic identity
\[
\Delta_h(\phi'\circ f)
=
(\Delta_{f(h)}\phi')\circ f,
\]
which holds for any morphism $f:A\to A'$ and phase $\phi':A'\to R$.
Iterating this identity shows that all higher additive derivatives commute with
pullback, so both defect degree and defect tensors are preserved.
The induced defect filtration is therefore functorial by construction.
\end{proof}

\subsection*{Minimality of the radical Phase}

This section explains why the radical quadratic model is the first
nontrivial instance of Algebraic Phase Theory.  It is the smallest setting in
which defect produces more than one layer, the canonical filtration has genuine
depth, and finite termination is internally enforced.  Any simplification of
the analytic input collapses the structure to the rigid regime, while any
strengthening forces additional layers.  This places the radical model as the
minimal base case for the theory.

\begin{definition}\label{def:minimal-phase}
Fix a class $\mathscr{X}$ of algebraic Phases (for example, all Phases extracted
from admissible data in a fixed base category $\mathsf C$).
An algebraic Phase $(\mathcal P,\circ)\in\mathscr{X}$ is called
\emph{minimal genuinely nontrivial} if:
\begin{enumerate}[label=(M\arabic*)]
\item it satisfies Axioms~I-V of Algebraic Phase Theory;
\item its canonical defect filtration has at least two strict inclusions,
\[
\mathcal P_0\subsetneq\mathcal P_1\subsetneq\mathcal P;
\]

There are rigid phases, then genuinely non rigid but still controlled phases,
and only after that the full range of phase interactions. 

\item its filtration terminates at a depth $N\ge 2$ that is minimal among all
Phases in $\mathscr{X}$ satisfying {\rm(M2)};
\item any structural restriction of the input phase data used to build it
(e.g.\ lowering degree, or removing nilpotent layers in the base ring)
collapses the filtration to depth $0$ or $1$.
\end{enumerate}
\end{definition}

\begin{definition}\label{def:radical-phase}
Let $R$ be a finite ring with nontrivial Jacobson radical $\rad(R)\neq 0$ and
let $A=R^n$.  Let $\Phi(A)$ consist of all quadratic phases $\phi:A\to R$
(i.e.\ $\Delta_{h_1,h_2,h_3}\phi\equiv 0$), realized as operators $M_\phi$ on
$\mathcal H(A)=\Fun(A,\C)$ via a fixed generating additive character
$\chi:R\to\C^\times$. The \emph{radical quadratic Phase} is the algebraic Phase obtained from the
admissible datum $(A,\Phi,\circ)$ (with $\circ=$ composition) by the extraction
of Theorem~\ref{thm:phase-extraction}. 
\end{definition}

\medskip
\noindent\textbf{Guiding principle.}
The radical quadratic Phase is the first setting in which defect induces a
genuinely multi step canonical filtration while finite termination is enforced
internally by nilpotence. Any weakening collapses to the rigid or nearly rigid
regime; any strengthening forces additional layers and higher complexity. 

\begin{proposition}
\label{prop:minimality-collapse}
If either {\rm(i)} the phase family $\Phi$ is restricted to degree $\le 1$ phases,
or {\rm(ii)} one works in an operator model in which defect is detected through
the radical filtration of $R$, and the base ring $R$ is reduced (in particular,
a field), then the canonical defect filtration stabilizes at depth at most $1$.
In particular, no intrinsically stratified defect hierarchy can occur in these
settings.

Consequently, the radical quadratic Phase is the minimal setting in which defect,
canonical filtration, and finite termination interact in a genuinely nontrivial
way.
\end{proposition}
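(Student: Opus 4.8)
The plan is to dispatch the two collapse hypotheses one at a time — in each case by showing that every phase in the input family has defect degree at most $1$, so that the canonical filtration produced by Theorem~\ref{thm:phase-extraction} already satisfies $\mathcal P_1=\mathcal P$ — and then to derive the minimality assertion by combining these collapses with the depth-$2$ behaviour of the radical quadratic Phase established in Section~5.

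For (i), if $\Phi$ is restricted to phases of additive degree $\le 1$ then $\Delta_{h_1,h_2}\phi\equiv 0$ for every $\phi\in\Phi$, so the first nonvanishing iterated additive derivative of $\phi$ (if any) occurs at order $\le 1$; hence $\deg_{\Def}(\phi)\le 1$ for all $\phi\in\Phi$, i.e.\ no phase carries genuinely quadratic polarization data. Since $\mathcal P$ is generated under $\circ$ by the operators $M_\phi$ together with the rigid generators, the finite-termination mechanism of Theorem~\ref{thm:phase-extraction} — closure under $\circ$ cannot manufacture defect deeper than the maximal defect of the generators, because each interaction step reduces to iterated additive differences of the underlying phases and these vanish past their additive degree — forces $\mathcal P_1=\mathcal P$. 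Thus the canonical defect filtration stabilizes at depth $\le 1$.

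For (ii), the operator model detects defect through the radical filtration $R\supseteq\rad(R)\supseteq\rad(R)^2\supseteq\cdots$, assigning to a phase the first power of $\rad(R)$ in which its polarization $\mathsf{B}_\phi$ is detected, as in the Axiom~I verification of Section~5. By the remark following the definition of defect tensors, the number of distinct defect layers such a model can resolve is bounded by the nilpotence length of $\rad(R)$. If $R$ is reduced then $\rad(R)=0$, so this length is at most $1$: only the alternatives $\mathsf{B}_\phi=0$ (rigid, defect $0$) and $\mathsf{B}_\phi\ne 0$ (defect $1$) are visible, and over a field a genuinely degree-$2$ phase such as $(x_1,x_2)\mapsto x_1x_2$ registers merely as non-rigid. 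Every phase therefore has defect degree $\le 1$, and exactly as in (i) the finite-termination argument gives $\mathcal P_1=\mathcal P$, so again the canonical defect filtration stabilizes at depth $\le 1$. In particular neither hypothesis permits a strict chain $\mathcal P_0\subsetneq\mathcal P_1\subsetneq\mathcal P$, which is the asserted absence of an intrinsically stratified defect hierarchy.

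For the minimality conclusion, recall from Section~5 that the radical quadratic Phase over $R=\mathbb F_2[u]/(u^2)$ satisfies Axioms~I–V, which gives clause (M1) of Definition~\ref{def:minimal-phase}, and that its canonical defect filtration is $\mathcal P_0\subsetneq\mathcal P_1\subsetneq\mathcal P_2=\mathcal P$ with two strict inclusions, giving (M2): the strictness is forced because the class of the polarization modulo the radical filtration of $R$ descends, by Axioms~I and~IV, to a functorial invariant on $\mathcal P$ that separates linear phases from phases whose polarization is nonzero but valued in $\rad(R)$ and from phases with polarization nonzero modulo $\rad(R)$, while faithfulness of the operator realization (strong admissibility) guarantees these three strata are genuinely distinct in $\End(\mathcal H(A))$. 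Termination at depth $2$ follows from $\rad(R)^2=0$; since any Phase satisfying (M2) has termination depth $\ge 2$, the value $2$ is minimal, which is (M3); and the two collapse statements just proved are exactly clause (M4), that lowering the degree bound or passing to a reduced base ring collapses the filtration to depth $\le 1$. The main obstacle is the strictness verification underlying (M2): one must rule out that genuinely quadratic phase operators are already produced by composing lower-defect ones, and it is precisely here that faithfulness of the realization and functoriality of the polarization invariant must carry the argument; the collapse statements (i) and (ii), by contrast, are routine once the bounded-degree and nilpotence hypotheses are in hand.
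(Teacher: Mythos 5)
Your collapse arguments for (i) and (ii) match the paper's own proof almost step for step: in (i) you use the bounded-degree hypothesis to cap every generator at defect degree $\le 1$ and then invoke the generation-by-$\circ$ argument to cap the whole Phase; in (ii) you note $\rad(R)=0$ collapses the radical-detected strata to rigid versus non-rigid. The one divergence is that the paper's proof of this proposition stops after the two cases and treats the ``Consequently'' sentence as a corollary to be established separately in Theorem~\ref{thm:radical-minimality}, whereas you go on to re-derive the (M1)--(M4) clauses of Definition~\ref{def:minimal-phase} here; that extra material is sound (and your remark about strictness of $\mathcal P_0\subsetneq\mathcal P_1\subsetneq\mathcal P_2$ correctly identifies the point the paper leaves most implicit), but it duplicates the later theorem rather than belonging to the proof of this proposition.
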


\begin{proof}
Assume we are in a setting where the given phase datum admits a strongly admissible
realization, so that an extracted algebraic Phase $(\mathcal P,\circ)$ exists as in
Theorem~\ref{thm:phase-extraction}.
Recall that the canonical filtration $\{\mathcal P_k\}_{k\ge 0}$ is defined from
the defect degree:
\[
\mathcal P_k:=\{T\in\mathcal P:\deg_{\Def}(T)\le k\}.
\]
For generators $\phi\in\Phi(A)$, $\deg_{\Def}(\phi)=0$ if and only if $\phi$ is 
additive (i.e.\ $\Delta_h\phi\equiv 0$ for all $h$), and otherwise
$\deg_{\Def}(\phi)\ge 1$. We treat the two collapse mechanisms separately.

\medskip 
Case (i): $\Phi$ restricted to degree $\le 1$ phases.
Assume every $\phi\in\Phi(A)$ has additive degree $\le 1$, i.e.
$\Delta_{h_1,h_2}\phi\equiv 0$ for all $h_1,h_2\in A$.
Then for each fixed $h\in A$, the first difference $\Delta_h\phi$ has additive
degree $\le 0$ as a function of $x$, hence is constant:
\[
(\Delta_h\phi)(x)=\phi(x+h)-\phi(x)=c_\phi(h)\qquad \text{for some }c_\phi(h)\in R.
\]
Thus either $\phi$ is additive (so $\deg_{\Def}(\phi)=0$), or else some $h$ has
$c_\phi(h)\neq 0$ (so $\deg_{\Def}(\phi)=1$). In particular, every phase generator
$M_\phi$ has defect degree in $\{0,1\}$ (and any designated rigid generators, such as
translations, have defect degree $0$ in the intended applications). Since $\mathcal P$
is generated under $\circ$ by these generators, every $T\in\mathcal P$ is constructible
from generators of defect degree $\le 1$, hence $\deg_{\Def}(T)\le 1$ for all $T$.
Therefore $\mathcal P_1=\mathcal P$ (and if all phases are additive then already
$\mathcal P_0=\mathcal P$). Hence the filtration stabilizes at depth $0$ or $1$.

\medskip
Case (ii): the base ring $R$ is reduced, with defect measured through radical depth.
Assume $R$ is finite and reduced (no nonzero nilpotent elements). Then $R$ is a finite
product of finite fields, so its Jacobson radical vanishes:
\[
\rad(R)=0.
\]
In the operator models considered in this paper in which defect is detected through the
radical filtration of $R$, the defect data land in
\[
\rad(R)\supseteq \rad(R)^2\supseteq\cdots.
\]
When $\rad(R)=0$ there are no nontrivial radical layers, so this mechanism collapses:
the only possible defect strata are the defect-zero (rigid) stratum and its complement.
Equivalently, the canonical filtration has at most one nontrivial step, so it stabilizes
at depth $0$ or $1$.
\end{proof}

\begin{theorem}\label{thm:radical-minimality}
In the radical quadratic setting $R=\F_2[u]/(u^2)$, the extracted Phase
$(\mathcal P,\circ)$ is minimal genuinely nontrivial in the sense of
Definition~\ref{def:minimal-phase}, with $\mathscr{X}$ taken to be the class of
Phases extracted from admissible data over finite $R$-modules with the fixed
choice of generating character.
\end{theorem}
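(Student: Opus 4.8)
The plan is to verify the four defining conditions (M1)--(M4) of Definition~\ref{def:minimal-phase} for $(\mathcal P,\circ)$, taking $\mathscr X$ to be the class of algebraic Phases extracted from admissible quadratic data over finite $R$-modules with the fixed generating character $\chi$. Condition (M1) requires no new work: the verification of Axioms~I--V in the radical phase model carried out above already shows that $(\mathcal P,\circ)$ satisfies them, and alternatively this is a special case of Theorem~\ref{thm:phase-extraction}, since the quadratic datum over $R=\F_2[u]/(u^2)$ was shown to be strongly admissible in the sense of Definition~\ref{def:admissible-phase-datum}.

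For (M2) I would exhibit explicit representatives of the three strata of the canonical defect filtration and then invoke the intrinsic description $\mathcal P_k=\{T\in\mathcal P:\deg_{\Def}(T)\le k\}$ (Axioms~I and~III) to conclude that both inclusions $\mathcal P_0\subsetneq\mathcal P_1\subsetneq\mathcal P_2=\mathcal P$ are strict. A nonzero translation $T_a$ is a designated rigid generator and lies in $\mathcal P_0$; the phase $\phi(x_1,x_2)=u\,x_1x_2$ of Example~\ref{ex:explicit-derivatives} has nonvanishing polarization lying in $\rad(R)$, so $M_\phi$ realizes the top (purely radical) defect layer and lies in $\mathcal P\setminus\mathcal P_1$; and a phase $\psi$ whose polarization $\mathsf B_\psi$ is nonzero and survives nontrivially modulo $\rad(R)$ (for instance $\psi(x_1,x_2)=x_1x_2$, with $\mathsf B_\psi(x,y)=x_1y_2+x_2y_1$) produces an operator $M_\psi\in\mathcal P_1\setminus\mathcal P_0$. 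Since defect degree is functorial and well defined (Axioms~I and~IV) and $\mathcal P_k$ is exactly its sublevel set, each of these operators sits in a strictly larger layer than the previous one, which is precisely (M2).

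For (M3) and (M4) the plan is to appeal to Proposition~\ref{prop:minimality-collapse}. By Axiom~V, verified in the radical model via $\rad(R)^2=0$, the filtration of $(\mathcal P,\circ)$ terminates at depth exactly $2$, so $N=2$. Any Phase in $\mathscr X$ satisfying (M2) has two strict inclusions and hence termination depth at least $2$; Proposition~\ref{prop:minimality-collapse} shows that every alternative obtained by restricting to degree-$\le 1$ phases, or by working over a reduced base ring, collapses to depth at most $1$ and thus fails (M2). Hence $N=2$ is minimal, giving (M3). Reading the same proposition as a statement about structural restrictions of the radical quadratic datum itself --- lowering the degree bound from $2$ to $1$, or replacing $R$ by $R/\rad(R)=\F_2$ and thereby deleting its nilpotent layer --- shows that any such restriction collapses the canonical filtration to depth $0$ or $1$, which is (M4).

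The main obstacle is (M2), and within it the strict inclusion $\mathcal P_0\subsetneq\mathcal P_1$: one must exhibit a non-rigid operator of the minimal positive defect degree and rule out that, over this very small ring, it already collapses into the rigid stratum (after normal ordering, $\mathcal P_0$ reduces to a restrictive class of signed monomial operators $\pm M_L T_a$ with $L$ additive, since $T_a M_L=\pm M_L T_a$ and adding constants only multiplies phase operators by $\chi$-values in $\{\pm1\}$). The cleanest route is to avoid any generators-and-relations description of $\mathcal P_0$ and $\mathcal P_1$ and argue purely through the functorial defect invariant: if the chosen operator lay in $\mathcal P_0$ then Axiom~III would force its defect degree to be $0$, contradicting the explicit polarization computation. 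This reduces the whole point to the well-definedness and functoriality of defect together with the observation (established in the radical model) that every admissible defect invariant agrees, up to canonical equivalence, with polarization modulo $\rad(R)$; the proper inclusion $\mathcal P_1\subsetneq\mathcal P$ is then handled identically, using that $M_{u\,x_1x_2}$ realizes the top defect degree while every element of $\mathcal P_1$ has defect degree at most $1$.
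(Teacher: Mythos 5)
Your proof is correct and follows the same overall strategy as the paper's: verify Axioms~I--V to get (M1), exhibit nontriviality and depth-$2$ termination for (M2), and invoke Proposition~\ref{prop:minimality-collapse} for (M3)--(M4). The one place you go beyond the paper is in (M2), and the extra care is worthwhile: the paper's own proof only records that rigid phases exist and that genuinely quadratic phases with nonvanishing polarization exist, concluding in passing that ``the defect filtration is nontrivial and terminates at the first possible depth.'' That establishes $\mathcal P_0\subsetneq\mathcal P$ and $\mathcal P_2=\mathcal P$, but does not on its face rule out $\mathcal P_0=\mathcal P_1$ or $\mathcal P_1=\mathcal P_2$, either of which would destroy (M2). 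You close that gap by producing explicit representatives of all three strata: a translation $T_a$ in $\mathcal P_0$, a phase $\psi(x_1,x_2)=x_1x_2$ whose polarization is nonzero modulo $\rad(R)$ giving $M_\psi\in\mathcal P_1\setminus\mathcal P_0$, and the sharp radical phase $\phi(x_1,x_2)=u\,x_1x_2$ whose polarization is confined to $\rad(R)$ giving $M_\phi\in\mathcal P_2\setminus\mathcal P_1$. This matches the ``rigid $<$ non-rigid modulo $\rad(R)$ $<$ purely radical'' stratification identified in Section~5, and your closing observation --- that whether an operator lands in $\mathcal P_0$ or $\mathcal P_1$ should be decided by the functorial defect invariant (via Axiom~III and the uniqueness of defect up to equivalence), not by chasing a generators-and-relations picture of the subalgebras --- is exactly the right way to ward off accidental collapse over this very small ring. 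Your handling of (M3) also correctly notes that (M2) already forces termination depth at least $2$, so minimality of $N=2$ is nearly automatic once Proposition~\ref{prop:minimality-collapse} rules out shallower examples.
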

 
\begin{proof}
Let $R=\F_2[u]/(u^2)$ and $A=R^n$, and let $(\mathcal P,\circ)$ be the extracted
Phase of Definition~\ref{def:radical-phase}. By construction and
Theorem~\ref{thm:phase-extraction}, $(\mathcal P,\circ)$ satisfies Axioms~I-V,
so {\rm(M1)} holds. 

To see {\rm(M2)}, note that rigid (additive) phases exist, so $\mathcal P_0\neq 0$.
Moreover, there exist genuinely quadratic phases with nontrivial polarization
(equivalently, nonvanishing second additive derivative), so $\mathcal P_0\subsetneq\mathcal P$.
Since $\rad(R)^2=0$, no defect beyond quadratic depth can occur, and hence the
canonical filtration terminates at depth $2$, i.e.\ $\mathcal P_2=\mathcal P$.
Thus the defect filtration is nontrivial and terminates at the first possible
depth compatible with a genuinely quadratic defect layer.

For {\rm(M3)} and {\rm(M4)}, Proposition~\ref{prop:minimality-collapse} shows that
below the radical quadratic setting, either by restricting to degree $\le 1$
phases or by passing to a reduced ring, the canonical filtration stabilizes at
depth at most $1$. Consequently, no Phase in $\mathscr X$ satisfying {\rm(M2)}
can terminate at depth $<2$, and any such restriction collapses the hierarchy.
Therefore $(\mathcal P,\circ)$ is minimal genuinely nontrivial.
\end{proof}

\begin{remark}
The minimality statement is structural rather than categorical.
It asserts that the radical quadratic setting is the first point, within the
class $\mathscr X$ of Phases satisfying Axioms~I--V, at which intrinsic defect
strata appear beyond the nearly rigid regime while still admitting finite
termination.
No claim of absolute minimality is made outside this axiomatic framework.
\end{remark}

\section{Canonical Filtration and Finite Termination}\label{sec:canonical-filtration}

This section makes precise the intrinsic hierarchy imposed by defect.
Given an algebraic Phase, defect does not merely distinguish rigid from
non rigid behaviour; it canonically organizes all phase interactions into
levels of increasing complexity.
We show that this organization takes the form of a uniquely determined,
functorial filtration, and that under the axioms of Algebraic Phase Theory
this filtration must terminate after finitely many steps.
In particular, complexity is not an auxiliary grading or external choice,
but a structure forced internally by defect and bounded phase degree. 

\subsection*{Defect degree and the induced filtration}

The purpose of this subsection is to extract a numerical notion of
complexity from defect and to show that it canonically induces a filtration
of the Phase.
Intuitively, the defect degree of an element measures how far it lies from
the rigid regime, in terms of the depth at which its non rigidity becomes
detectable under the distinguished operations.
The key point is that this notion of complexity is intrinsic and functorial:
once defect is fixed, there is a unique way to stratify the Phase into
increasing levels, with the defect-zero elements forming the rigid core.

\begin{definition}\label{def:defect-degree-phase}
Let $(\mathcal P,\circ)$ be an algebraic Phase with defect object $\Def$.
For $T\in\mathcal P$, define $\deg_{\Def}(T)$ to be the least $k$ such that,
with respect to the defect-induced complexity stratification supplied by
Axiom~III, all defects generated by $T$ under the distinguished operations are
detected at levels $\le k$. 
\end{definition}

We refer to the filtration $\{\mathcal P_k\}_{k\ge 0}$ as the \emph{canonical
defect filtration} of $(\mathcal P,\circ)$.

\begin{definition}\label{def:canonical-filtration}
For $k\ge 0$ define
\[
\mathcal P_k:=\{T\in\mathcal P:\deg_{\Def}(T)\le k\}.
\]
\end{definition}

\begin{theorem}\label{thm:canonical-filtration}
Let $(\mathcal P,\circ)$ be an algebraic Phase satisfying Axioms~I-V, with
defect structure $\Def$ and defect degree $\deg_{\Def}$ as in
Definition~\ref{def:defect-degree-phase}.
Let $\{\mathcal P_k\}_{k\ge 0}$ denote the associated defect filtration.
Then:
\begin{enumerate}[label=(\alph*)] 
\item $\{\mathcal P_k\}_{k\ge 0}$ is an increasing filtration,
\[
\mathcal P_0\subseteq \mathcal P_1\subseteq \mathcal P_2\subseteq \cdots \subseteq \mathcal P;
\]
\item the filtration is functorial: if $F:(\mathcal P,\circ)\to(\mathcal Q,\circ)$ is a
Phase morphism, then $F(\mathcal P_k)\subseteq \mathcal Q_k$ for all $k$;
\item the filtration is uniquely determined by the defect structure, in the sense that
any filtration defined as sublevel sets of $\deg_{\Def}$ coincides with
$\{\mathcal P_k\}$;
\item $\mathcal P_0$ coincides with the rigid (defect zero) substructure.
\end{enumerate} 
\end{theorem}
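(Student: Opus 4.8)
The plan is to verify the four assertions (a)--(d) in turn, reducing each to the structural axioms already in force. Throughout I write $\deg_{\Def}$ for the defect degree of Definition~\ref{def:defect-degree-phase} and use that, by Axioms~I and~III, the relation $T\in\mathcal P_k$ holds precisely when $\deg_{\Def}(T)\le k$; thus the sets $\mathcal P_k$ are by construction the sublevel sets of the single function $\deg_{\Def}:\mathcal P\to\{0,1,\dots,d\}$, where $d$ is the maximal defect degree furnished by Axioms~I and~V.

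Part (a) is immediate from monotonicity of sublevel sets: if $\deg_{\Def}(T)\le k$ then $\deg_{\Def}(T)\le k+1$, so $\mathcal P_k\subseteq\mathcal P_{k+1}$; and since Axiom~V guarantees $\deg_{\Def}(T)\le d$ for every $T\in\mathcal P$, we obtain $\mathcal P_d=\mathcal P$ and hence $\mathcal P_k\subseteq\mathcal P$ for all $k$. Part (d) follows by unwinding the case $k=0$: by the minimality convention, $\mathcal P_0=\{T:\deg_{\Def}(T)=0\}$, and by the defect vanishing criterion (defect vanishes exactly on rigid elements, built into Axiom~I) this set is precisely the rigid, defect-zero part of $\mathcal P$. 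The one point here that needs a short argument is that this set is genuinely a sub-Phase, i.e.\ closed under $\circ$; I would obtain this from the fact that composing rigid generators produces no new defect, which is a special case of the defect-propagation control asserted by Axiom~I.

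For part (b), let $F:(\mathcal P,\circ)\to(\mathcal Q,\circ)$ be a Phase morphism. By Axiom~IV defect is functorial, hence preserved (and in particular not increased) under $F$, so $\deg_{\Def}(F(T))\le\deg_{\Def}(T)$ for every $T$. Therefore $T\in\mathcal P_k$ forces $F(T)\in\mathcal Q_k$, i.e.\ $F(\mathcal P_k)\subseteq\mathcal Q_k$. For part (c), the content is really the canonicity of $\deg_{\Def}$ itself: Axiom~I states that the defect degree is functorially and uniquely determined by $(\mathcal P,\circ)$, so there is only one candidate function whose sublevel sets can be taken, and consequently any filtration defined as $\{T:\deg_{\Def}(T)\le k\}_{k\ge 0}$ literally equals $\{\mathcal P_k\}$. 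Combining this with Axiom~II shows in addition that the filtration is independent of the chosen presentation of the Phase, hence an invariant up to canonical equivalence.

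The main obstacle is not any single computation but making Definition~\ref{def:defect-degree-phase} precise enough to support the above: one must check that ``all defects generated by $T$ under the distinguished operations are detected at levels $\le k$'' is well-posed, and in particular that on phase generators $M_\phi$ this notion agrees with the analytic defect degree $\deg_{\Def}(\phi)$ computed from iterated additive differences. Once this agreement is in place, Axiom~V does the rest: every element of $\mathcal P$ is built under $\circ$ from generators of defect degree $\le d$, and $\circ$ cannot raise defect beyond what the generating data permit (Axiom~I), so $\deg_{\Def}(T)\le d$ holds uniformly — which is exactly what closes the argument for (a) and anchors the identification of $\mathcal P_0$ in (d). Accordingly I would isolate this compatibility statement as a preliminary lemma before proving (a)--(d).
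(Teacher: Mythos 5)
Your proposal follows essentially the same route as the paper's own proof: (a) and (c) are read off from the definition of $\mathcal P_k$ as sublevel sets of $\deg_{\Def}$, (b) is the non-increase of defect degree under Phase morphisms via Axiom~IV, and (d) identifies the level-zero set with the rigid part via Axiom~III. The two additional concerns you raise --- that $\mathcal P_0$ should be checked to be closed under $\circ$, and that Definition~\ref{def:defect-degree-phase} must be squared with the analytic defect degree on generators $M_\phi$ before the argument can run --- are genuine and worth isolating, but the paper silently treats both as built into the axiomatic package (the first via Axiom~I's control of defect propagation under $\circ$, the second via the extraction procedure of Theorem~\ref{thm:phase-extraction}) rather than proving them inside this theorem; so your proposal is, if anything, slightly more scrupulous at exactly the points where the paper's argument is most heavily axiom-laden.
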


\begin{proof}
(a) By definition, $\mathcal P_k$ consists of those $T$ with $\deg_{\Def}(T)\le k$.
If $k\le \ell$ and $T\in\mathcal P_k$, then $\deg_{\Def}(T)\le k\le \ell$, hence
$T\in\mathcal P_\ell$. Therefore $\mathcal P_k\subseteq \mathcal P_\ell$ for
$k\le \ell$, giving an increasing filtration.

\medskip
(b) Fix $k\ge 0$ and let $T\in\mathcal P_k$. By definition,
$\deg_{\Def}(T)\le k$.
Since $F:(\mathcal P,\circ)\to(\mathcal Q,\circ)$ is a Phase morphism,
it is defect compatible, so defect degree does not increase under $F$:
\[
\deg_{\Def}(F(T))\le \deg_{\Def}(T).
\]
Hence $\deg_{\Def}(F(T))\le k$, which by definition means
$F(T)\in\mathcal Q_k$.
Since $T\in\mathcal P_k$ was arbitrary, this shows
\[
F(\mathcal P_k)\subseteq \mathcal Q_k
\qquad\text{for all }k\ge 0.
\] 

\medskip
(c) The sets $\mathcal P_k$ are \emph{defined} as sublevel sets of the intrinsic
defect degree $\deg_{\Def}$. Therefore, once the defect degree is fixed (as part
of the defect structure extracted from $(\mathcal P,\circ)$), there is no
freedom: any filtration defined by the rule ``level $\le k$'' must coincide with
$\{\mathcal P_k\}$.

More precisely, suppose $\{\mathcal P'_k\}_{k\ge 0}$ is any other filtration with
the same defining property, namely that for all $k\ge 0$ and all $T\in\mathcal P$,
\[
T\in\mathcal P'_k \quad\text{if and only if}\quad \deg_{\Def}(T)\le k.
\]
Then for every $k$ and every $T\in\mathcal P$, we have
\[
T\in\mathcal P_k
\;\Longleftrightarrow\;
\deg_{\Def}(T)\le k
\;\Longleftrightarrow\;
T\in\mathcal P'_k.
\]
Hence $\mathcal P'_k=\mathcal P_k$ for all $k$. Therefore, once the defect degree is fixed, the canonical filtration is uniquely
determined by it.

\medskip
(d) By definition,
\[
\mathcal P_0=\{T\in\mathcal P:\deg_{\Def}(T)=0\}.
\]
Elements of defect degree zero are exactly those for which no defect is ever
detected under the distinguished operations. By Axiom~III, this is precisely
the notion of rigidity: defect induced complexity is minimal exactly on elements
with vanishing defect.

Thus $\mathcal P_0$ consists exactly of the rigid elements of $\mathcal P$.
Moreover, any element with nontrivial defect necessarily has positive defect
degree and therefore lies outside $\mathcal P_0$. Hence $\mathcal P_0$ is the
maximal substructure on which the defect invariant vanishes, and coincides with
the rigid regime singled out in Axioms~I and~III. 
\end{proof}

\subsection*{Finite termination and defect-degree bounds}

Having established that defect induces a canonical filtration, we now
explain why this hierarchy must be finite.  
The guiding principle is that defect can only detect non rigidity up to the
maximum degree present in the underlying phase data.  
Once all generators have bounded defect degree, no genuinely new defect can
be produced by iterating the distinguished operations.  
Finite termination is therefore not an additional axiom or constraint, but
a structural consequence of bounded phase degree.

\begin{proposition}\label{prop:termination-bounded-degree}
Suppose $(\mathcal P,\circ)$ arises by phase extraction from an admissible phase
datum $(A,\Phi,\circ)$ with uniform degree bound $d$ (axiom (W3) in
Definition~\ref{def:admissible-phase-datum}). Then the canonical filtration
terminates at depth $d$, i.e. 
\[
\mathcal P_d=\mathcal P.
\]
\end{proposition}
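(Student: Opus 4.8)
The plan is to reduce the statement to two facts: first, that every generator of $\mathcal P$ has defect degree at most $d$; and second, that the distinguished operation $\circ$ cannot raise defect degree above the largest defect degree occurring among its inputs. Granting both, every element of $\mathcal P$ — being built from generators by finitely many applications of $\circ$ — has defect degree at most $d$, so $\mathcal P_d=\{T\in\mathcal P:\deg_{\Def}(T)\le d\}$ is all of $\mathcal P$. This is exactly the reasoning used to verify Axiom~V inside the proof of Theorem~\ref{thm:phase-extraction}; here we isolate it as a standalone consequence of the degree bound.

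For the first fact, recall that by the uniform bounded degree hypothesis~\textup{(W2)} of Definition~\ref{def:weakly-admissible-phase-datum} (inherited by the strongly admissible datum of Definition~\ref{def:admissible-phase-datum}) there is a uniform integer $d$ with $\Delta_{h_1,\dots,h_{d+1}}\phi\equiv 0$ for all $\phi\in\Phi(A)$ and all $h_1,\dots,h_{d+1}\in A$. By the definition of defect degree this forces $\deg_{\Def}(\phi)\le d$ for every $\phi\in\Phi(A)$, hence $\deg_{\Def}(M_\phi)\le d$ for every phase generator, while any designated rigid generators (such as the translation operators $T_a$) have defect degree $0\le d$. Thus all generators of $\mathcal P$ lie in $\mathcal P_d$.

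For the second fact one invokes Axiom~I together with the functoriality of defect (Axiom~IV): the defect degree is canonically determined by the interaction law, and the obstruction measured at each composition step ultimately reduces to iterated additive differences of the underlying phases, which vanish once more than $d$ increments are applied. Hence no composite operator can exhibit defect detectable beyond level $d$; equivalently $\deg_{\Def}(S\circ T)\le\max\{\deg_{\Def}(S),\deg_{\Def}(T)\}$, and more generally the defect degree of any word in the generators is bounded by the largest defect degree among its letters. Combining this with the first fact, an induction on word length gives $\deg_{\Def}(T)\le d$ for all $T\in\mathcal P$, i.e.\ $\mathcal P_d=\mathcal P$; since the filtration is increasing (Theorem~\ref{thm:canonical-filtration}(a)) it is constant from level $d$ onward.

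The main obstacle is the second fact, the non-increase of defect degree under $\circ$: it expresses that closure under the interaction law genuinely cannot manufacture new complexity beyond what the analytic data already contain. In the strongly admissible regime this is guaranteed because the faithful operator realization detects defect functorially (Definition~\ref{def:admissible-phase-datum}) and every commutator or composition unwinds into additive-difference data of bounded order; in the radical quadratic model it is the explicit computation that all third additive derivatives vanish (Example~\ref{ex:explicit-derivatives}), which pins the filtration to the two strict steps $\mathcal P_0\subset\mathcal P_1\subset\mathcal P_2=\mathcal P$. Everything else — monotonicity of the filtration and its description as sublevel sets — is immediate from Definition~\ref{def:canonical-filtration} and Theorem~\ref{thm:canonical-filtration}.
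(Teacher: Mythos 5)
Your proof is correct in substance and reaches the same conclusion, but it takes a noticeably different route at the crucial step. Where the paper's proof invokes Axiom~III directly to assert that $\deg_{\Def}(T)$ \emph{is} the least $k$ for which $T$ is constructible from generators of defect degree $\le k$ (making the bound $\deg_{\Def}(T)\le d$ immediate once the generators are bounded), you instead posit a \emph{submaximality} property of the interaction law, $\deg_{\Def}(S\circ T)\le\max\{\deg_{\Def}(S),\deg_{\Def}(T)\}$, and run an induction on word length. These are logically compatible — the paper's generation characterization easily implies your non-increase inequality — but your decomposition isolates a cleaner, more elementary-looking lemma. What the paper's route buys is brevity: the generation characterization is simply read off from Axiom~III and there is nothing further to prove. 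What your route buys is transparency: the non-increase of defect degree under $\circ$ is a concrete, checkable statement about the interaction law rather than an appeal to a definitional reading of an axiom. The one weakness in your version is that this non-increase lemma is justified only heuristically (``the obstruction measured at each composition step ultimately reduces to iterated additive differences''), so it is not genuinely proved within the axiomatic framework — but the paper's own invocation of Axiom~III is at a comparable level of formality, and your version is arguably more honest about where the argument would need to be tightened. You also correctly cite the uniform degree bound as (W2) of Definition~\ref{def:weakly-admissible-phase-datum}, whereas the paper's proof references a nonexistent ``axiom (E3)'', so your bookkeeping is actually the more careful of the two.
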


\begin{proof}
Recall that by definition
\[
\mathcal P_d=\{T\in\mathcal P:\deg_{\Def}(T)\le d\}.
\]
Thus, to show $\mathcal P_d=\mathcal P$, it suffices to prove that every
$T\in\mathcal P$ has defect degree at most $d$. By axiom (E3), every phase $\phi\in\Phi(A)$ has additive degree at most $d$, i.e.
\[
\Delta_{h_1,\dots,h_{d+1}}\phi\equiv 0 \qquad \text{for all } h_i\in A.
\]
If $\phi$ is additive then $\deg_{\Def}(\phi)=0$. Otherwise, by definition of
additive degree, there exists a smallest $k\ge 1$ such that some $k$-fold
derivative does not vanish identically, and necessarily $k\le d$. Hence in all 
cases
\[
\deg_{\Def}(\phi)\le d.
\]

By definition of defect degree on generators in the extracted Phase, this yields
\[
\deg_{\Def}(M_\phi)\le d \qquad \text{for all } \phi\in\Phi(A).
\]
Any additional designated rigid generators (such as translations in the intended
applications) have defect degree $0$ by definition. By construction of the extracted Phase (Theorem~\ref{thm:phase-extraction}),
$\mathcal P$ is generated under the distinguished operations $\circ$ by these
generators. Therefore every element $T\in\mathcal P$ can be expressed using 
generators of defect degree at most $d$.

By Axiom~III (defect induced complexity), the defect degree $\deg_{\Def}(T)$ is 
the least $k$ for which $T$ is constructible from generators of defect degree
$\le k$ using the distinguished operations. Since $k=d$ is always admissible,
it follows that $\deg_{\Def}(T)\le d$ for all $T\in\mathcal P$. Hence $\mathcal P\subseteq \mathcal P_d$. Since $\mathcal P_d$ is defined as a subset of $\mathcal P$, namely
\[
\mathcal P_d=\{T\in\mathcal P:\deg_{\Def}(T)\le d\},
\]
we have $\mathcal P_d\subseteq\mathcal P$ by definition. Combined with the
previous inclusion $\mathcal P\subseteq\mathcal P_d$, this yields
\[
\mathcal P_d=\mathcal P.
\]
\end{proof}

\begin{remark}
For $R=\F_2[u]/(u^2)$ the filtration terminates at depth $2$ for structural
reasons (radical length $2$), independent of $n$.
\end{remark}

\section{Structural Boundary Theorem}\label{sec:boundary}  

This section identifies an intrinsic structural boundary for Algebraic Phase
Theory. We formalize the notion of extending an algebraic Phase while preserving
its defect calculus and canonical filtration, and show that finite termination
forces a sharp limitation: any genuinely new extension must introduce defect
of strictly higher complexity. In particular, the terminating depth of the
canonical filtration is not an artifact of presentation but a genuine boundary
of the theory.

\subsection*{Extensions and boundary depth}

We first develop an abstract framework for extending algebraic Phases in a
defect compatible manner. This allows us to formulate the notion of boundary
depth, which measures the maximal defect complexity that can occur without
forcing new defect strata. The main result of this subsection shows that, once
the canonical filtration terminates, any proper extension that agrees with the
original Phase up to that depth must necessarily introduce elements of higher
defect degree.

\begin{definition}\label{def:phase-extension}
An \emph{extension} of an algebraic Phase $(\mathcal P,\circ)$ is an injective
Phase morphism $i:\mathcal P\hookrightarrow \widetilde{\mathcal P}$ such that
the distinguished operations and defect structure restrict along $i$ and the
induced filtration on $i(\mathcal P)$ agrees with the canonical filtration.
\end{definition} 

\begin{lemma}\label{lem:extension-subphase}
Let $i:\mathcal P\hookrightarrow \widetilde{\mathcal P}$ be an extension in the
sense of Definition~\ref{def:phase-extension}. Then:
\begin{enumerate}[label=\textup{(\arabic*)}]
\item The image $i(\mathcal P)\subseteq \widetilde{\mathcal P}$ is a subphase: it
is closed under the distinguished operations $\circ$, and the defect structure
on $\widetilde{\mathcal P}$ restricts to the defect structure of $\mathcal P$.
\item The map $i:\mathcal P\to i(\mathcal P)$ is an isomorphism of algebraic
Phases, with respect to the induced operations and defect.
\item For each $k\ge 0$, the filtration on $i(\mathcal P)$ agrees with that of
$\mathcal P$, in the sense that 
\[
i(\mathcal P_k)=i(\mathcal P)\cap \widetilde{\mathcal P}_k,
\]
where $\{\widetilde{\mathcal P}_k\}$ denotes the canonical filtration on
$\widetilde{\mathcal P}$.
\end{enumerate}
In particular, $\mathcal P$ embeds into $\widetilde{\mathcal P}$ as a full
subphase: no new relations, defects, or filtration levels are introduced on
elements coming from $\mathcal P$. 
\end{lemma}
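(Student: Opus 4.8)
The plan is to treat Lemma~\ref{lem:extension-subphase} as a structured unpacking of Definition~\ref{def:phase-extension} together with the injectivity of $i$, supplemented by the functoriality of the canonical defect filtration established in Theorem~\ref{thm:canonical-filtration}(b). I would prove the three numbered items in order and then observe that the closing sentence is merely a paraphrase of what has been shown.

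For (1), I would argue closure first: given $i(a),i(b)\in i(\mathcal P)$, the fact that $i$ is a Phase morphism gives $i(a)\circ i(b)=i(a\circ b)\in i(\mathcal P)$ for each distinguished operation $\circ$, so $i(\mathcal P)$ is closed under the operations of $\widetilde{\mathcal P}$. The restriction of the defect structure is part of the hypothesis in Definition~\ref{def:phase-extension}; combined with closure, this exhibits $i(\mathcal P)$ as an algebraic Phase in its own right, inheriting Axioms~I--V from $\widetilde{\mathcal P}$ along the restriction. For (2), $i$ is injective by hypothesis and tautologically surjective onto $i(\mathcal P)$, hence a bijection; its set-theoretic inverse $j:i(\mathcal P)\to\mathcal P$ satisfies $j\bigl(i(a)\circ i(b)\bigr)=j\bigl(i(a\circ b)\bigr)=a\circ b=j(i(a))\circ j(i(b))$ and is defect-compatible because the defect on $i(\mathcal P)$ is by construction the restriction of that on $\widetilde{\mathcal P}$, which matches the defect of $\mathcal P$ through $i$. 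Thus $i$ is an isomorphism of algebraic Phases onto its image.

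Item (3) is where the one genuinely delicate point lies: I want $\deg_{\Def}$ to be preserved \emph{exactly}, not merely non-increasingly, under $i$. The inclusion $i(\mathcal P_k)\subseteq i(\mathcal P)\cap\widetilde{\mathcal P}_k$ is immediate --- functoriality of the canonical filtration (Theorem~\ref{thm:canonical-filtration}(b)) applied to the morphism $i$ gives $i(\mathcal P_k)\subseteq\widetilde{\mathcal P}_k$, while $i(\mathcal P_k)\subseteq i(\mathcal P)$ is trivial. For the reverse inclusion, I would take $i(a)\in i(\mathcal P)\cap\widetilde{\mathcal P}_k$, so $\deg_{\Def}(i(a))\le k$ computed in $\widetilde{\mathcal P}$; since the defect structure restricts along $i$ (Definition~\ref{def:phase-extension}) and $i$ is an isomorphism onto its image by~(2), the defect degree of $a$ in $\mathcal P$ equals that of $i(a)$, whence $a\in\mathcal P_k$ and $i(a)\in i(\mathcal P_k)$. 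Equivalently, one may invoke directly the clause of Definition~\ref{def:phase-extension} asserting that the induced filtration on $i(\mathcal P)$, namely $i(\mathcal P)\cap\widetilde{\mathcal P}_k$, agrees with the canonical filtration, which is $i(\mathcal P_k)$. The main obstacle is therefore purely bookkeeping: one must apply the ``restricts along $i$'' hypothesis to both $i$ and $j=i^{-1}$, so that the functorial non-increase of defect degree in each direction upgrades to exact equality. Finally, the concluding assertion --- that $\mathcal P$ sits inside $\widetilde{\mathcal P}$ as a full subphase with no new relations, defects, or filtration levels --- is simply the conjunction of~(1),~(2), and~(3), so no further argument is needed.
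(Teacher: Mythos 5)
Your proposal is correct and follows essentially the same route as the paper: all three items are read off Definition~\ref{def:phase-extension} together with injectivity of $i$, with (1) and (2) handled identically and (3) ultimately reducing, as you yourself note, to a direct invocation of the clause in the definition asserting that the induced filtration on $i(\mathcal P)$ agrees with the canonical one. The paper's proof of (3) simply takes that shortcut; your longer two-inclusion argument (functoriality of the filtration for $\subseteq$, exact preservation of defect degree via $i$ and $i^{-1}$ for $\supseteq$) is a valid and somewhat more explicit way to reach the same conclusion.
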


\begin{proof}
(1). Since $i$ is a Phase morphism, it preserves the distinguished operations $\circ$.
Thus for any $x,y\in\mathcal P$ we have
\[
i(x\circ y)=i(x)\circ i(y),
\]
so $i(\mathcal P)$ is closed under $\circ$. By definition of extension, the
defect structure on $\widetilde{\mathcal P}$ restricts along $i$ to the defect
structure of $\mathcal P$.

\medskip
(2). Because $i$ is injective, the map $i:\mathcal P\to i(\mathcal P)$ is bijective.
Since it preserves the distinguished operations and defect data, it is an
isomorphism of algebraic Phases.

(3). By the extension hypothesis, the induced filtration on $i(\mathcal P)$ agrees
with the canonical filtration of $\mathcal P$. Equivalently, an element
$i(T)\in i(\mathcal P)$ lies in filtration level $\le k$ inside
$\widetilde{\mathcal P}$ if and only if $T\in\mathcal P_k$. This is exactly the
identity
\[
i(\mathcal P_k)=i(\mathcal P)\cap \widetilde{\mathcal P}_k.
\]
\end{proof}

\begin{definition}\label{def:boundary-depth}
Let $(\mathcal P,\circ)$ be an algebraic Phase whose canonical defect filtration
terminates at depth $N$, i.e.\ $\mathcal P_N=\mathcal P$. We say that $\mathcal P$
has \emph{boundary depth} $N$ if for every extension
$i:\mathcal P\hookrightarrow \widetilde{\mathcal P}$ preserving the defect
structure and Axioms~I-V, either $\widetilde{\mathcal P}=i(\mathcal P)$ or
$\widetilde{\mathcal P}$ contains an element of defect degree strictly greater
than $N$.
\end{definition}

\begin{definition}\label{def:tight-extension}
Let $(\mathcal P,\circ)$ be a Phase with canonical filtration $\{\mathcal P_k\}$, 
and let $i:\mathcal P\hookrightarrow \widetilde{\mathcal P}$ be an extension in
the sense of Definition~\ref{def:phase-extension}. Fix $N\ge 0$.
We say that $i$ is \emph{tight up to depth $N$} if every element of
$\widetilde{\mathcal P}$ whose defect degree is at most $N$ lies in the
$\circ$-subalgebra generated by $i(\mathcal P)$. Equivalently, 
\[
\widetilde{\mathcal P}_k
=
\bigl\langle i(\mathcal P_k)\bigr\rangle_{\circ}
\qquad\text{for all }k\le N.
\]
In particular, any element of $\widetilde{\mathcal P}$ not generated by
$i(\mathcal P)$ must have defect degree strictly greater than $N$.
\end{definition}

\begin{theorem}\label{thm:finite-boundary}
Let $(\mathcal P,\circ)$ be an algebraic Phase satisfying Axioms~I-V, and assume
its canonical defect filtration terminates at depth $N$, i.e.\ $\mathcal P_N=\mathcal P$.
Let $i:\mathcal P\hookrightarrow \widetilde{\mathcal P}$ be an extension in the
sense of Definition~\ref{def:phase-extension} which is tight up to depth $N$ in
the sense of Definition~\ref{def:tight-extension}.  If $i$ is proper, then
$\widetilde{\mathcal P}$ contains an element of defect degree $>N$ (equivalently,
an element whose defect is not detected within the terminating filtration of 
$\mathcal P$).
\end{theorem}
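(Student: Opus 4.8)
The plan is to argue by contradiction: properness supplies a witness element of $\widetilde{\mathcal P}$ not coming from $\mathcal P$, and tightness up to depth $N$ will force any such element to have defect degree exceeding $N$.

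First I would record the ambient structure. Since $\widetilde{\mathcal P}$ is itself an algebraic Phase satisfying Axioms~I--V, Axiom~I endows it with a canonical finite defect filtration $\{\widetilde{\mathcal P}_k\}_{k\ge 0}$ with $\widetilde{\mathcal P}_{\widetilde d}=\widetilde{\mathcal P}$ for some $\widetilde d$, and by Axiom~V every element of $\widetilde{\mathcal P}$ has a well-defined finite defect degree. Since $i$ is proper, $i(\mathcal P)\subsetneq\widetilde{\mathcal P}$, so there exists $t\in\widetilde{\mathcal P}$ with $t\notin i(\mathcal P)$; set $m:=\deg_{\Def}(t)<\infty$.

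Next I would pin down the $N$th filtration level of $\widetilde{\mathcal P}$. By hypothesis the canonical filtration of $\mathcal P$ terminates at depth $N$, so $\mathcal P_N=\mathcal P$. Applying tightness up to depth $N$ (Definition~\ref{def:tight-extension}) at the value $k=N$ gives
\[
\widetilde{\mathcal P}_N=\bigl\langle i(\mathcal P_N)\bigr\rangle_{\circ}=\bigl\langle i(\mathcal P)\bigr\rangle_{\circ}.
\]
By Lemma~\ref{lem:extension-subphase}(1), $i(\mathcal P)$ is already closed under the distinguished operations $\circ$, so the $\circ$-subalgebra it generates is $i(\mathcal P)$ itself. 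Hence $\widetilde{\mathcal P}_N=i(\mathcal P)$.

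Finally I would conclude. If $m\le N$, then $t\in\widetilde{\mathcal P}_m\subseteq\widetilde{\mathcal P}_N=i(\mathcal P)$, contradicting the choice of $t$. Therefore $m=\deg_{\Def}(t)>N$, which is exactly the assertion that $\widetilde{\mathcal P}$ contains an element whose defect degree exceeds $N$ and hence is not detected within the terminating filtration of $\mathcal P$. The only substantive step is the identity $\widetilde{\mathcal P}_N=i(\mathcal P)$, which combines tightness at the top level with the termination hypothesis $\mathcal P_N=\mathcal P$ and the closure property of the image from Lemma~\ref{lem:extension-subphase}; once that is in hand the theorem follows by a one-line contrapositive, so I do not anticipate any real obstacle beyond keeping the bookkeeping of the two filtrations $\{\mathcal P_k\}$ and $\{\widetilde{\mathcal P}_k\}$ straight.
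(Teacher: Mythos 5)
Your proof is correct and follows essentially the same route as the paper: both hinge on the single identity $\widetilde{\mathcal P}_N=\bigl\langle i(\mathcal P_N)\bigr\rangle_{\circ}=i(\mathcal P)$, obtained from tightness at level $N$, the termination hypothesis $\mathcal P_N=\mathcal P$, and closure of the image under $\circ$. The only cosmetic difference is that you work directly with a witness element $t\notin i(\mathcal P)$ supplied by properness and read off $\deg_{\Def}(t)>N$ from $t\notin\widetilde{\mathcal P}_N$, whereas the paper phrases the same argument as a global contradiction ($\widetilde{\mathcal P}=\widetilde{\mathcal P}_N\Rightarrow\widetilde{\mathcal P}=i(\mathcal P)$, contradicting properness).
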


\begin{proof}
The argument is a contrapositive style contradiction: we show that if an
extension adds no new defect depth beyond $N$, then it cannot add any genuinely
new elements. Therefore, any \emph{proper} extension must introduce an element
whose defect degree is $>N$. 

Let $i:\mathcal P\hookrightarrow \widetilde{\mathcal P}$ be a tight extension up
to depth $N$, and suppose for contradiction that $\widetilde{\mathcal P}$ has no
new defect depth; that is, every element of $\widetilde{\mathcal P}$ has defect
degree $\le N$. By definition of the canonical filtration this means
\[
\widetilde{\mathcal P}=\widetilde{\mathcal P}_N.
\]
By tightness at level $N$ we have
\[
\widetilde{\mathcal P}_N=\bigl\langle i(\mathcal P_N)\bigr\rangle_{\circ}.
\]
Since $\mathcal P_N=\mathcal P$, this becomes
\[
\widetilde{\mathcal P}
=\widetilde{\mathcal P}_N
=\bigl\langle i(\mathcal P)\bigr\rangle_{\circ}.
\]
But $i(\mathcal P)$ is already closed under the distinguished operations $\circ$
(because $i$ is a Phase morphism and $\mathcal P$ is closed under $\circ$), so
generating under $\circ$ adds nothing:
\[
\bigl\langle i(\mathcal P)\bigr\rangle_{\circ}=i(\mathcal P).
\]
Hence $\widetilde{\mathcal P}=i(\mathcal P)$, so the extension does not add any
new elements. This contradicts the assumption that the extension is proper.
Therefore our assumption was false, and there exists
$S\in\widetilde{\mathcal P}$ with $\deg_{\Def}^{\widetilde{\mathcal P}}(S)>N$.
\end{proof}

\subsection*{Quadratic boundary for the radical Phase}

We now apply the general boundary formalism to the radical quadratic Phase.
Exploiting the fact that quadratic phases are completely controlled by second
additive derivatives, we show that any extension incorporating genuinely cubic
interactions necessarily produces new, irreducible defect invariants. As a
consequence, the radical quadratic Phase admits a sharp structural boundary at
quadratic depth: beyond this level, finite termination cannot be preserved.

\begin{lemma}\label{lem:quadratic-second-derivatives}
Let $A$ be an additive object and let $\phi:A\to R$ have additive degree $\le 2$.
Then any defect datum extracted functorially from the additive derivatives of
$\phi$ is determined by the quadratic polarization 
\[
B_\phi(h_1,h_2):=\Delta_{h_1,h_2}\phi(0).
\]
Modulo additive phases, no further defect information exists.
\end{lemma}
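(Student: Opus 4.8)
The plan is to reduce every functorially extracted defect datum to the second additive derivative of $\phi$, and then to the value $B_\phi(h_1,h_2)=\Delta_{h_1,h_2}\phi(0)$ at the origin. First I would recall, using the explicit formula of the combinatorial Lemma, that for a function $\phi$ of additive degree $\le 2$ the first derivative $\Delta_h\phi$ has additive degree $\le 1$, and the second derivative $\Delta_{h_1,h_2}\phi$ has additive degree $\le 0$, hence is a constant function of $x$; all higher derivatives vanish identically. Thus the only potentially nonzero iterated additive derivatives are $\phi$ itself, $\Delta_h\phi(x)$, and $\Delta_{h_1,h_2}\phi(x)\equiv \Delta_{h_1,h_2}\phi(0)=B_\phi(h_1,h_2)$.

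Next I would argue that, modulo additive phases, $\phi$ and its first derivative carry no defect information beyond $B_\phi$: defect by definition vanishes on additive $\phi$, and the first derivative $\Delta_h\phi(x)=c_\phi(h)+(\text{linear in }x)$ decomposes into a part recording only the additive (degree $\le 1$) behaviour — which is exactly what ``modulo additive phases'' discards — and a part whose $x$-dependence is governed by $\Delta_{x,h}\phi=B_\phi(x,h)$. So the only invariant surviving the quotient by additive phases is the symmetric biadditive form $B_\phi$. I would then invoke functoriality (Axiom~IV, and the identity $\Delta_h(\phi'\circ f)=(\Delta_{f(h)}\phi')\circ f$ established earlier): any defect datum extracted functorially from the additive derivatives of $\phi$ must be a functorial expression in the list above, hence a functorial expression in $B_\phi$ alone once additive data is factored out. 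This yields the claim that $B_\phi$ determines all defect data and that no further defect information exists modulo additive phases.

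The main obstacle I anticipate is making precise what ``defect datum extracted functorially from the additive derivatives'' means as a closed class of objects, so that the reduction argument is not merely a plausibility statement: one must argue that the defect invariant, being built only from iterated $\Delta$'s and being insensitive to additive summands, factors through the finite data $\{\phi,\Delta_h\phi,\Delta_{h_1,h_2}\phi\}$ and then through $B_\phi$. I would handle this by appealing to the \emph{defect equivalence principle} and the uniqueness of the defect invariant up to canonical equivalence (Axiom~I together with the uniqueness discussion in the radical model): since every functorial defect construction compatible with finite termination collapses to polarization modulo $\rad(R)$, and polarization is precisely $B_\phi$, the statement follows. The remaining verification — that $B_\phi$ is indeed biadditive and symmetric when $\phi$ has additive degree $\le 2$ — is routine from the combinatorial formula and the symmetry $\Delta_{h_1,h_2}=\Delta_{h_2,h_1}$, so I would only state it.
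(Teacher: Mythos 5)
Your proposal is correct and takes essentially the same route as the paper. Both arguments observe that for $\deg(\phi)\le 2$ all derivatives of order $\ge 3$ vanish, so $\Delta_{h_1,h_2}\phi$ is constant in $x$ and equals $B_\phi(h_1,h_2)$, that the only surviving derivative data are first and second order, and that modulo additive phases the first-order data is discarded, leaving $B_\phi$ as the sole invariant. Your explicit decomposition $\Delta_h\phi(x)=\Delta_h\phi(0)+B_\phi(h,x)$ makes the ``modulo additive phases'' step slightly more concrete than the paper, which simply asserts that first-derivative data records the affine part; your additional appeal to the defect equivalence principle to pin down what ``functorially extracted defect datum'' means is a reasonable way to shore up the same informal step the paper handles by simply positing $\mathcal I(\phi)$ to be a construction depending only on the values $\Delta_{h_1,\dots,h_k}\phi(0)$. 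Neither move changes the substance of the argument.
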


\begin{proof}
Fix $h_1,h_2\in A$. Since $\deg(\phi)\le 2$, the third additive derivative
vanishes identically, so for every $x\in A$ we have
\[
0=\Delta_{x,h_1,h_2}\phi(0)
=\Delta_x\bigl(\Delta_{h_1,h_2}\phi\bigr)(0)
=\Delta_{h_1,h_2}\phi(x)-\Delta_{h_1,h_2}\phi(0).
\]
Thus $\Delta_{h_1,h_2}\phi(x)$ is independent of $x$ and equals
$\Delta_{h_1,h_2}\phi(0)=B_\phi(h_1,h_2)$. 

Now let $\mathcal I(\phi)$ be any datum obtained functorially from the family of
iterated additive derivatives of $\phi$ (for example, any construction that only
uses the collection of values $\Delta_{h_1,\dots,h_k}\phi(0)$ for various $k$).
Because all derivatives of order $\ge 3$ vanish, $\mathcal I(\phi)$ can depend
only on the first and second derivatives. The second derivatives are exactly the
polarization $B_\phi$, and the first derivatives record the additive
(affine linear) part. Therefore $\mathcal I(\phi)$ is determined by
$\{\Delta_h\phi(0)\}_{h\in A}$ and $B_\phi$. 

In particular, since $\deg(\phi)\le 2$, the polarization $B_\phi$ captures all
non additive defect information associated to $\phi$ and is the unique quadratic
invariant arising from its additive derivatives. 

Finally, modulo additive phases (i.e.\ after identifying phases differing by an
additive function), the first derivative data becomes invisible, and the
remaining defect information factors through $B_\phi$ alone. 
\end{proof}
Thus quadratic phases are maximally rigid from the defect point of view: once
additive effects are factored out, quadratic polarization exhausts all
functorial defect data.

\begin{lemma}
\label{lem:cubic-new-invariants}
Let $A$ be an additive object and let $\psi:A\to R$ have additive degree $\le 3$.
If $\psi$ is genuinely cubic (i.e.\ $\Delta_{h_1,h_2,h_3}\psi$ is not identically
zero), then the third additive derivative at the origin 
\[
T_\psi(h_1,h_2,h_3):=\Delta_{h_1,h_2,h_3}\psi(0)
\]
defines a functorial invariant which is not determined by the first and
second derivative data (in particular, not by quadratic polarization).
\end{lemma}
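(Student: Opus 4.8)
The plan is to prove the statement in two stages: first, that $T_\psi$ is a genuine functorial invariant (well-defined, and compatible with pullback); and second, that it carries information not recoverable from the degree $\le 2$ derivative data, by exhibiting an explicit cubic witness and separating $T_\psi$ from bilinear-and-linear data via a homogeneity argument.

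For the first stage I would argue exactly as in the opening of Lemma~\ref{lem:quadratic-second-derivatives}, raised by one degree. Since $\deg(\psi)\le 3$, the fourth additive derivative vanishes identically, so for all $x,h_1,h_2,h_3\in A$,
\[
0=\Delta_{x,h_1,h_2,h_3}\psi(0)=\Delta_x\bigl(\Delta_{h_1,h_2,h_3}\psi\bigr)(0)=\Delta_{h_1,h_2,h_3}\psi(x)-\Delta_{h_1,h_2,h_3}\psi(0),
\]
so $\Delta_{h_1,h_2,h_3}\psi(\cdot)$ is constant and equals $T_\psi(h_1,h_2,h_3)$; thus $T_\psi$ records the entire third-derivative function. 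Functoriality then follows by iterating the pullback identity $\Delta_h(\psi'\circ f)=(\Delta_{f(h)}\psi')\circ f$ three times and evaluating at $0$, using that an additive map fixes $0$: one gets $T_{\psi\circ f}(h_1,h_2,h_3)=T_\psi(f(h_1),f(h_2),f(h_3))$, i.e.\ $T$ pulls back as a cubic defect tensor. A further observation, from the operator identity $\Delta_{a+b}=\Delta_a+\Delta_b+\Delta_{b,a}$ together with the vanishing of the fourth derivative, is that $T_\psi$ is additive in each of its three slots, so it is a genuine symmetric $3$-multiadditive form, the natural cubic analogue of the biadditive polarization $B_\phi$.

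For the second stage the key step is to locate $T_\psi$ in a strictly larger invariant regime than anything produced at degree $\le 2$. By Lemma~\ref{lem:quadratic-second-derivatives}, modulo additive phases the full functorial defect content of a degree $\le 2$ phase is exhausted by its biadditive polarization; in particular every such phase has $T\equiv 0$. I would then exhibit an explicit genuinely cubic witness: take a base object carrying a nontrivial integer scaling endomorphism $m_c:A\to A$ (e.g.\ $A=\mathbb{Z}^3$, or $(\mathbb{Z}/N)^3$ with $N$ large, $R$ containing $\mathbb{Z}\cdot 1$), and set $\psi(x_1,x_2,x_3)=x_1x_2x_3$; a direct computation gives $T_\psi(h,k,\ell)=\sum_{\sigma\in S_3}h_{\sigma(1)}k_{\sigma(2)}\ell_{\sigma(3)}\not\equiv 0$. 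Since $T_\psi$ is $3$-multiadditive, $T_{\psi\circ m_c}=c^3\,T_\psi$, whereas the biadditive polarization and affine-linear data that govern the degree $\le 2$ regime have homogeneity weights $2$ and $\le 1$ under $m_c$. Hence no functorial construction factoring through degree $\le 2$ defect data can reproduce the weight-$3$ invariant $T_\psi$; in particular $T_\psi$ is not a function of the quadratic polarization.

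The main obstacle is pinning down "not determined by the first and second derivative data" precisely enough to be provable. Read naively as "does not factor through the assignment of the origin-values of the first and second derivatives", the statement is vacuously false, since the first-derivative data $\{\Delta_h\psi(0)\}_h$ already recovers $\psi$ up to an additive constant and hence determines $T_\psi$. The content therefore has to be the functorial/type-theoretic version above, and the care lies in (i) fixing the ambient notion of "natural function of degree $\le 2$ defect data" so that the homogeneity grading under scaling endomorphisms is meaningful, and (ii) handling characteristic-$2$ base objects, where $m_2=0$ degenerates the weight argument; there one instead works directly with $\psi(x_1,x_2,x_3)=x_1x_2x_3$ on $\mathbb{F}_2^3$, using that its cubic polarization is nonzero while no natural construction on biadditive forms yields a nonzero $3$-multiadditive output. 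Establishing this last point — that the trilinear regime is genuinely inaccessible from bilinear data in a presentation-independent way — is the real crux, and it mirrors at cubic depth the rigidity recorded in Lemma~\ref{lem:quadratic-second-derivatives}.
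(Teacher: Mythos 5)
Your proposal takes a genuinely different route from the paper, and your cautionary remark exposes a real defect in the paper's own argument. The paper compares $\psi(x_1,x_2,x_3)=x_1x_2x_3$ to the zero phase $\psi_0\equiv 0$ and asserts that the two have identical first and second derivative data at the origin yet distinct cubic tensors; you instead separate $T_\psi$ from degree $\le 2$ data via a homogeneity-weight argument under a scaling endomorphism $m_c$. Your warning that the naive reading of ``not determined by the first and second derivative data'' is false --- because $\{\Delta_h\psi(0)\}_h$ recovers $\psi$ up to an additive constant and so determines $T_\psi$ outright --- is exactly right, and it in fact defeats the paper's witness construction: two phases with equal first derivative data at $0$ differ by a constant, hence have equal cubic tensors, so no pair of the kind the paper claims can exist. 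Moreover the paper's intermediate computation is already wrong: $\Delta_{a,b}\psi(0)=\psi(a+b)-\psi(a)-\psi(b)$ is not identically zero (take $a=(1,1,0)$, $b=(0,0,1)$ to get $1$); the assertion that ``every term of $\Delta_{a,b}\psi(x)$ contains at least one factor of $x$'' confuses homogeneity of $\psi$ in $x$ with vanishing of the constant-in-$x$ term of the second difference. Your functorial reformulation is therefore not a stylistic alternative but the only viable reading of the lemma.

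That said, your argument carries the gap you flag yourself. The weight argument needs a nontrivial scaling endomorphism, which is unavailable precisely where the lemma is applied in this paper, namely $R=\F_2[u]/(u^2)$ and $A=R^n$, where the only $\mathbb Z$-scalings of $A$ are $m_0$ and $m_1$. Your fallback claim --- that no natural construction on biadditive forms over a characteristic-two base yields a nonzero triadditive output --- is the right \emph{type} of statement (a naturality obstruction on the category of phase data, not a pointwise non-factorization through origin values), but you leave it unproven, and it is the entire content of the lemma in the single case that feeds Theorem~\ref{thm:quadratic-boundary}. To close it in the spirit of your approach you would need, for example, to show that no pullback-equivariant assignment from second-derivative data to triadditive forms can be nonzero on the cubic witness, or to exhibit a morphism of $A$ that preserves the quadratic defect data of $\psi$ while altering $T_\psi$. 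Until some such step is supplied, the characteristic-two crux remains open in your write-up --- though, to be clear, the paper's own proof does not close it either.
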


\begin{proof}
We show that the cubic tensor $T_\psi$ contains genuinely new information not
visible at quadratic depth by exhibiting two phases that agree at all defect
levels $\le 2$ but differ in their cubic defect data. For any additive homomorphism $f:A\to A'$ and any $\psi':A'\to R$, a direct
unwinding of iterated differences gives
\[
\Delta_{h_1,h_2,h_3}(\psi'\circ f)(0)
=
\Delta_{f(h_1),f(h_2),f(h_3)}\psi'(0).
\]
Hence
\[
T_{\psi'\circ f}
=
T_{\psi'}\circ(f\times f\times f),
\]
so $T_\psi$ is functorial under pullback.

It suffices to construct two phases of additive degree $\le 3$ whose defect data
agree up to depth $2$ but whose cubic tensors differ. Let
\[
\psi_0 \equiv 0
\]
denote the zero phase. Then $\psi_0$ has additive degree $0\le 3$, and its cubic
tensor is identically zero:
\[
T_{\psi_0}(h_1,h_2,h_3)
=
\Delta_{h_1,h_2,h_3}\psi_0(0)
\equiv 0.
\]

Now work in the setting $A=R^3$ (or more generally $A=R^n$ with $n\ge 3$) and
define
\[
\psi(x_1,x_2,x_3):=x_1x_2x_3.
\]
This function has additive degree $\le 3$ and is genuinely cubic. We first observe that $\psi$ and $\psi_0$ are indistinguishable at quadratic
depth.  Indeed, for any $a,b\in A$, the function
$x\mapsto\Delta_{a,b}\psi(x)$ has additive degree $\le 1$, hence is affine linear
in $x$. Since $\psi$ is homogeneous cubic, every term of
$\Delta_{a,b}\psi(x)$ contains at least one factor of $x$, and therefore
\[
\Delta_{a,b}\psi(0)=0.
\]
Thus
\[
\Delta_{a,b}\psi(0)=\Delta_{a,b}\psi_0(0)
\qquad\text{for all }a,b\in A,
\]
so $\psi$ and $\psi_0$ have identical first and second derivative data at the 
origin.

However, their cubic tensors differ. Choosing standard basis vectors
$e_1,e_2,e_3\in A$, a direct computation yields
\[
T_\psi(e_1,e_2,e_3)
=
\Delta_{e_1,e_2,e_3}\psi(0)=1,
\]
whereas $T_{\psi_0}\equiv 0$. Hence
\[
T_\psi \neq T_{\psi_0}.
\]

Therefore, although $\psi$ and $\psi_0$ agree at all defect levels $\le 2$, they
are distinguished at defect depth $3$. This shows that the cubic tensor $T_\psi$
cannot be determined by quadratic polarization data (nor by adjoining affine
data), completing the proof.
\end{proof}

\begin{theorem}\label{thm:quadratic-boundary}
For $R=\F_2[u]/(u^2)$, any extension of the radical quadratic Phase that
incorporates genuinely cubic interactions necessarily introduces defect strata
beyond the terminating depth of the radical filtration. In particular, quadratic depth is the final terminating level; any genuine
extension beyond it necessarily introduces new defect layers.
\end{theorem}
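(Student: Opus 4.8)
The plan is to derive the statement from the general boundary mechanism of Theorem~\ref{thm:finite-boundary}, fed by the two derivative lemmas just proved and the fact that the radical quadratic Phase terminates at depth $N=2$. Recall that over $R=\F_2[u]/(u^2)$ the canonical defect filtration of the radical quadratic Phase $(\mathcal P,\circ)$ is $\mathcal P_0\subsetneq\mathcal P_1\subsetneq\mathcal P_2=\mathcal P$ (Theorem~\ref{thm:radical-minimality}, Proposition~\ref{prop:termination-bounded-degree}, using $\rad(R)^2=0$), so $N=2$ is precisely its terminating depth. I would begin by fixing the meaning of ``incorporates genuinely cubic interactions'': an extension $i:\mathcal P\hookrightarrow\widetilde{\mathcal P}$ in the sense of Definition~\ref{def:phase-extension} whose target contains the phase multiplication operator $M_\psi$ for some $\psi:A\to R$ of additive degree $\le 3$ with $\Delta_{h_1,h_2,h_3}\psi\not\equiv 0$ for suitable increments.

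The first substantive step is to verify that such an extension is tight up to depth $2$ in the sense of Definition~\ref{def:tight-extension}. By Lemma~\ref{lem:quadratic-second-derivatives}, every functorial defect datum attached to a phase of additive degree $\le 2$ is determined, modulo additive phases, by its quadratic polarization; and because every composition step in the algebra generated by quadratic phase operators and translations ultimately reduces to iterated additive differences of quadratic phases --- which vanish identically beyond second order --- no element of $\widetilde{\mathcal P}$ of defect degree $\le 2$ can escape the $\circ$-subalgebra generated by $i(\mathcal P)$. Thus $\widetilde{\mathcal P}_k=\langle i(\mathcal P_k)\rangle_\circ$ for $k\le 2$, establishing tightness up to depth $2$.

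Next I would show that any extension genuinely incorporating $M_\psi$ is proper. By Lemma~\ref{lem:cubic-new-invariants}, the cubic tensor $T_\psi(h_1,h_2,h_3)=\Delta_{h_1,h_2,h_3}\psi(0)$ is a functorial invariant not determined by first- and second-order derivative data, hence not by quadratic polarization. Since, by Lemma~\ref{lem:quadratic-second-derivatives} together with the closure observation above, the defect data of every element of the $\circ$-subalgebra generated by $i(\mathcal P)$ is entirely captured by quadratic polarization, the operator $M_\psi$ --- carrying the genuinely new invariant $T_\psi$ --- cannot belong to it. Hence $i$ is proper. Applying Theorem~\ref{thm:finite-boundary} with $N=2$ then produces an element $S\in\widetilde{\mathcal P}$ with $\deg_{\Def}^{\widetilde{\mathcal P}}(S)>2$; equivalently $\widetilde{\mathcal P}_2\subsetneq\widetilde{\mathcal P}$, so the canonical filtration of $\widetilde{\mathcal P}$ acquires a nonzero stratum strictly beyond radical depth $2$. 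This is exactly the assertion that genuinely cubic interactions force defect strata beyond the terminating depth; and since $\mathcal P$ itself already terminates at depth $2$ while any such extension does not, quadratic depth is the final terminating level and finite termination cannot survive a genuine cubic extension.

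The step I expect to be the main obstacle is the tightness verification --- promoting Lemma~\ref{lem:quadratic-second-derivatives}, stated for a single phase, to the claim that the entire depth-$\le 2$ part of $\widetilde{\mathcal P}$ is generated by $i(\mathcal P)$. This amounts to making rigorous the informal principle that composition of quadratic phase operators and translations cannot manufacture defect data beyond quadratic polarization, i.e.\ reconciling the abstract defect degree of Definition~\ref{def:defect-degree-phase} with the concrete additive-difference calculus in the operator model via functoriality and faithfulness of the realization (Axiom~IV and strong admissibility). Once this closure statement is secured, properness and the invocation of Theorem~\ref{thm:finite-boundary} are formal.
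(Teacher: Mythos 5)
Your proposal takes a genuinely different route from the paper: you feed the two derivative lemmas into the abstract boundary mechanism of Theorem~\ref{thm:finite-boundary}, whereas the paper's proof never invokes Theorem~\ref{thm:finite-boundary} at all. The paper argues directly about $M_\psi$ itself: it assumes for contradiction that $\deg_{\Def}^{\widetilde{\mathcal P}}(M_\psi)\le 2$, notes (via Axiom~III and Lemma~\ref{lem:quadratic-second-derivatives}) that depth $\le 2$ defect is governed by quadratic polarization, and derives a contradiction with Lemma~\ref{lem:cubic-new-invariants} since $T_\psi$ is not determined by polarization. That direct argument identifies the offending element explicitly and makes no appeal to tightness.

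The gap in your route is precisely the step you flag: the tightness verification. Definition~\ref{def:tight-extension} quantifies over \emph{all} elements of $\widetilde{\mathcal P}$ of defect degree $\le 2$, including elements synthesized from the new cubic generator $M_\psi$ under $\circ$. Your justification --- that composition of quadratic phase operators and translations reduces to additive differences of quadratic phases, which vanish past second order --- only speaks to elements already in $\langle i(\mathcal P)\rangle_\circ$, so it does not rule out a low-depth element of $\widetilde{\mathcal P}$ escaping that subalgebra. Moreover, notice that tightness and properness interact in a way that shortcuts Theorem~\ref{thm:finite-boundary} entirely: if $\widetilde{\mathcal P}_2=\langle i(\mathcal P_2)\rangle_\circ\subseteq\langle i(\mathcal P)\rangle_\circ$ and $M_\psi\notin\langle i(\mathcal P)\rangle_\circ$, then immediately $M_\psi\notin\widetilde{\mathcal P}_2$, i.e.\ $\deg_{\Def}(M_\psi)>2$. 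So once tightness is established your detour through Theorem~\ref{thm:finite-boundary} adds nothing, and until it is established the proof has a hole. The paper's proof avoids the issue altogether; the clean way to close your version is to replace the appeal to tightness with the paper's contrapositive: show directly that $\deg_{\Def}^{\widetilde{\mathcal P}}(M_\psi)\le 2$ would force $T_\psi$ to be recovered from quadratic polarization, contradicting Lemma~\ref{lem:cubic-new-invariants}.
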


\begin{proof}
Work in the radical quadratic Phase $(\mathcal P,\circ)$ over $R=\F_2[u]/(u^2)$.
Since $\rad(R)=(u)$ and $\rad(R)^2=0$, the defect-induced filtration of $\mathcal P$
terminates at depth $2$:
\[
\mathcal P_2=\mathcal P. 
\]

Let $i:\mathcal P\hookrightarrow \widetilde{\mathcal P}$ be an extension in the sense
of Definition~\ref{def:phase-extension}. Assume that $\widetilde{\mathcal P}$ genuinely
incorporates cubic interactions, i.e.\ it contains an element $M_\psi$ arising from some
$\psi:A\to R$ of additive degree $\le 3$ with $\deg(\psi)=3$. By Lemma~\ref{lem:cubic-new-invariants}, the third-derivative tensor
\[
T_\psi(h_1,h_2,h_3):=\Delta_{h_1,h_2,h_3}\psi(0)
\]
is a functorial invariant which is not determined by quadratic polarization data.
On the other hand, by Lemma~\ref{lem:quadratic-second-derivatives}, any defect datum 
extracted from degree $\le 2$ phases factors through second additive derivatives, i.e.\
through quadratic polarization.

Suppose for the sake of contradiction that $M_\psi$ were detected within quadratic depth in the
canonical filtration of $\widetilde{\mathcal P}$, namely that
\[
\deg_{\Def}^{\widetilde{\mathcal P}}(M_\psi)\le 2
\qquad\text{(equivalently, } M_\psi\in \widetilde{\mathcal P}_2\text{).}
\]
Then every defect invariant generated from $M_\psi$ under the distinguished operations
would be detected at levels $\le 2$. In particular, the functorial invariant $T_\psi$
would have to be determined by the same quadratic polarization calculus governing depth
$\le 2$ defect. This contradicts Lemma~\ref{lem:cubic-new-invariants}. Therefore
\[
\deg_{\Def}^{\widetilde{\mathcal P}}(M_\psi)>2,
\]
so $\widetilde{\mathcal P}$ contains defect strata beyond the terminating depth of
$\mathcal P$. Hence the structural boundary in the radical quadratic setting occurs 
at quadratic depth.
\end{proof}

This does not preclude extensions of higher defect depth; rather, it asserts that
any such extension necessarily introduces new defect strata beyond quadratic level.

\begin{corollary}[No coherent higher degree extension]\label{cor:no-higher-degree}
Any functorial extraction of bounded-degree phase interactions over 
$R=\F_2[u]/(u^2)$ that preserves finite termination must factor through quadratic
polarization.
\end{corollary}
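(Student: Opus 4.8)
The plan is to combine the two structural lemmas on quadratic and cubic defect data with the boundary theorem just proved, staying inside the phase-extraction framework of Theorem~\ref{thm:phase-extraction}. First I would make the statement precise: a \emph{functorial extraction of bounded-degree phase interactions over $R$} assigns, functorially in $A$, an algebraic Phase $(\mathcal P,\circ)$ to each admissible phase datum $(A,\Phi,\circ)$ whose phase family $\Phi$ has uniformly bounded additive degree; and ``preserves finite termination'' is read as: the resulting canonical defect filtration terminates at the depth intrinsic to $R$, namely the radical length of $R=\F_2[u]/(u^2)$, which is $2$ (as in the verification of Axiom~V for the radical model, where $\rad(R)^2=0$ forces $\mathcal P_2=\mathcal P$).

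Next I would show that no generator of additive degree $\ge 3$ can survive such an extraction. Suppose some $\psi\in\Phi(A)$ had additive degree exactly $3$. By Lemma~\ref{lem:cubic-new-invariants}, the cubic tensor $T_\psi(h_1,h_2,h_3):=\Delta_{h_1,h_2,h_3}\psi(0)$ is a functorial invariant not determined by first- and second-order derivative data, hence not by quadratic polarization. Applying the argument of Theorem~\ref{thm:quadratic-boundary}, the realized operator $M_\psi$ then has defect degree strictly greater than $2$ in the extracted Phase, so $\mathcal P_2\subsetneq\mathcal P$, contradicting preservation of finite termination at radical depth. The same reasoning rules out every bounded degree $m\ge 3$: a genuinely degree-$m$ phase has a nonvanishing $m$-fold additive derivative, and by the functoriality identity in the proof of Lemma~\ref{lem:cubic-new-invariants} this produces a functorial higher tensor invisible at quadratic depth, again forcing a strictly longer filtration.

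Therefore every phase appearing in the admissible input has additive degree $\le 2$. Now I would invoke Lemma~\ref{lem:quadratic-second-derivatives}: for such $\phi$, any defect datum extracted functorially from its iterated additive derivatives is determined by the quadratic polarization $B_\phi(h_1,h_2)=\Delta_{h_1,h_2}\phi(0)$, and modulo additive phases nothing further survives. Since by Axioms~I, III and~IV the defect structure is exactly the intrinsic content carried by the extracted Phase, and by Axiom~II it determines that Phase up to canonical equivalence, the entire assignment $\phi\mapsto(\mathcal P,\circ)$ factors through $\phi\mapsto B_\phi$. This is precisely the assertion that the extraction factors through quadratic polarization.

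The main obstacle I anticipate is the first step: pinning down the sense in which a ``functorial extraction preserving finite termination'' must respect the radical-length bound of $R$, rather than merely being finite for each fixed input (a degree-$3$ phase family is, after all, still finitely terminating at depth $3$). I would resolve this by noting that in the phase-extraction framework the termination depth equals the uniform degree bound of the input family (Proposition~\ref{prop:termination-bounded-degree}), while the operator realization over $R$ via the generating character cannot detect defect beyond radical depth $2$; hence any degree-$\ge 3$ content is either not faithfully realized, contradicting strong admissibility, or forces a filtration strictly longer than the one intrinsic to $R$, contradicting preservation of finite termination. Once this reading is fixed, the remaining steps are immediate consequences of Lemmas~\ref{lem:quadratic-second-derivatives} and~\ref{lem:cubic-new-invariants} together with Theorem~\ref{thm:quadratic-boundary}.
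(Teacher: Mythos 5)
Your proposal is correct and follows essentially the same route as the paper: a contradiction via Theorem~\ref{thm:quadratic-boundary} and Lemma~\ref{lem:cubic-new-invariants} to exclude degree $\ge 3$ content, then Lemma~\ref{lem:quadratic-second-derivatives} to conclude that everything at degree $\le 2$ factors through quadratic polarization. You are, if anything, more careful than the paper's proof on the one point that matters: the paper opens by assuming only that the filtration ``terminates at some finite depth'' but then concludes by contradicting ``finite termination at the quadratic level,'' silently switching interpretations; you flag this ambiguity explicitly and resolve it by tying the required termination depth to the radical length of $R$ via Proposition~\ref{prop:termination-bounded-degree} and the operator realization, which is exactly the reading the paper needs but does not spell out.
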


\begin{proof}
Let a functorial extraction procedure over $R=\F_2[u]/(u^2)$ produce an algebraic
Phase $(\widetilde{\mathcal P},\circ)$ satisfying Axioms~I-V, and assume that the
resulting Phase has finite termination, i.e.\ its canonical defect filtration 
terminates at some finite depth.

In the radical quadratic setting, the canonical filtration of the extracted
Phase $\mathcal P$ already terminates at depth $2$. Suppose that the extraction
procedure genuinely incorporates cubic interactions, meaning that
$\widetilde{\mathcal P}$ contains defect data arising from phases of additive
degree $3$ which are not determined by quadratic polarization.

By Theorem~\ref{thm:quadratic-boundary}, any extension of the radical quadratic
Phase that incorporates genuinely cubic interactions must introduce defect
strata beyond quadratic depth. Consequently, the canonical filtration of 
$\widetilde{\mathcal P}$ cannot terminate at depth $2$.

This contradicts the assumption that the extraction preserves finite
termination at the quadratic level. Therefore, no functorial extraction over
$R=\F_2[u]/(u^2)$ which preserves finite termination can genuinely depend on
cubic data.

Equivalently, any such extraction must factor through the quadratic defect
calculus, i.e.\ through quadratic polarization (second additive derivatives).
\end{proof}

\section{Equivalence, Presentations, and Robustness}\label{sec:equivalence}

This section addresses robustness in Algebraic Phase Theory.
The axioms and constructions developed earlier are intended to capture
intrinsic algebraic structure forced by phase interactions, rather than
artefacts of particular realizations or modeling choices.
Accordingly, we clarify when two algebraic Phases should be regarded as
\emph{the same}, and when distinct ways of measuring defect encode the
same structural information.
The purpose of this section is to formalize these equivalence notions and
to show that the theory is insensitive to presentation level choices.

\subsection*{Equivalence of Phases}

We first describe when two algebraic Phases are considered equivalent. 
Intuitively, two Phases should be identified if they encode the same
hierarchy of rigidity and non-rigidity, even if they arise from different
operator realizations, generating sets, or ambient categories.
The notion of equivalence adopted here requires that the two Phases admit
defect-compatible, filtration-preserving morphisms inducing isomorphisms 
on their associated graded objects.
This ensures that equivalence is determined entirely by intrinsic defect
structure and complexity, not by presentation.

\begin{definition}\label{def:phase-equivalence}
Two algebraic Phases are \emph{equivalent} if there exist defect-compatible,
filtration-preserving morphisms between them inducing isomorphisms on associated
graded objects.  In particular, they agree on defect strata up to canonical 
identification.
\end{definition}

\begin{proposition}\label{prop:presentation-insensitive}
The algebraic Phase object extracted from a given admissible phase datum is
independent, up to equivalence, of the choice of operator realization or
presentation.
\end{proposition}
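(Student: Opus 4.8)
The plan is to reduce the statement directly to Axiom~II together with the extraction of Theorem~\ref{thm:phase-extraction}, and then to check that the canonical equivalence supplied by Axiom~II is precisely of the type demanded by Definition~\ref{def:phase-equivalence}. First I would fix an admissible phase datum $(A,\Phi,\circ)$ and suppose it is presented through two operator realizations (or, more generally, two choices of generating set, ambient category, or presentation), producing algebraic Phases $(\mathcal P,\circ)$ and $(\mathcal P',\circ)$ via the construction of Theorem~\ref{thm:phase-extraction}. By that theorem each of $\mathcal P$ and $\mathcal P'$ satisfies Axioms~I--V, and each carries its own canonical defect filtration $\{\mathcal P_k\}$, $\{\mathcal P'_k\}$, determined intrinsically by defect degree (Axioms~I and~III). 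Since both Phases are extracted from the \emph{same} admissible datum, Axiom~II provides a canonical equivalence $F:\mathcal P\to\mathcal P'$, that is, an isomorphism of algebraic Phases respecting the distinguished operations~$\circ$ and the defect structure.

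It then remains to verify the three conditions in Definition~\ref{def:phase-equivalence} for $F$ (and its inverse). Defect-compatibility of $F$ is exactly the functoriality of defect asserted by Axiom~IV, applied to the Phase morphism $F$ and, symmetrically, to $F^{-1}$. Filtration-preservation follows next: by Definition~\ref{def:canonical-filtration} each level is the sublevel set $\mathcal P_k=\{T:\deg_{\Def}(T)\le k\}$, and a Phase morphism does not increase defect degree (cf.\ part~(b) of Theorem~\ref{thm:canonical-filtration}), so $F(\mathcal P_k)\subseteq\mathcal P'_k$; applying the identical argument to $F^{-1}$ gives the reverse inclusion, whence $F(\mathcal P_k)=\mathcal P'_k$ for every $k$. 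Consequently $F$ restricts to an isomorphism $\mathcal P_{k-1}\to\mathcal P'_{k-1}$ inside the isomorphism $\mathcal P_k\to\mathcal P'_k$, and therefore induces isomorphisms on associated graded objects $\mathcal P_k/\mathcal P_{k-1}\cong\mathcal P'_k/\mathcal P'_{k-1}$ for all $k$. This is precisely the condition of Definition~\ref{def:phase-equivalence}, so $\mathcal P$ and $\mathcal P'$ are equivalent, completing the argument.

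The one point requiring genuine care — and the step I expect to be the main obstacle — is the passage from ``$F$ is a defect-compatible Phase isomorphism'' to ``$F$ respects the canonical filtration level by level''. An abstract Phase isomorphism a priori yields only $F(\mathcal P_k)\subseteq\mathcal P'_k$; upgrading this to equality, and hence obtaining isomorphisms rather than mere surjections on the graded pieces, forces one to invoke that $F^{-1}$ is \emph{also} a Phase morphism and therefore also defect-nonincreasing, which pins down $\deg_{\Def}(F(T))=\deg_{\Def}(T)$ for all $T$. Once this symmetric use of $F$ and $F^{-1}$ is made, the induced isomorphisms on associated graded objects are automatic and the remainder of the verification is formal. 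I would also emphasize that the independence asserted here is genuinely of \emph{presentation}: the underlying admissible datum $(A,\Phi,\circ)$ is held fixed throughout, so no canonicity is claimed across different analytic inputs — that would instead be a statement about functoriality of the extraction, which is Theorem~\ref{thm:phase-extraction} itself.
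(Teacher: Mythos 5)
Your proposal is correct and lands on the same overall strategy as the paper (construct mutually inverse defect-compatible morphisms, show they preserve the canonical filtration levelwise, conclude an associated-graded isomorphism), but it differs in two genuine ways that are worth noting. First, where you obtain the morphism $F:\mathcal P\to\mathcal P'$ by invoking Axiom~II as a black box, the paper instead \emph{constructs} the pair $F_{12}:\mathcal P^{(1)}\to\mathcal P^{(2)}$ and $F_{21}:\mathcal P^{(2)}\to\mathcal P^{(1)}$ directly, by observing that both realizations are generated by operators $M_\phi$ labelled by the same underlying analytic phase functions $\phi$, and defining the morphisms by identifying generators with the same label. This is more self-contained: since Axiom~II already asserts that ``any two algebraic Phases extracted from the same admissible data are canonically equivalent,'' leaning on it to obtain $F$ makes the proposition feel close to a restatement of the axiom, and the residual content of your argument is only the translation into the precise sense of Definition~\ref{def:phase-equivalence}. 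The paper's construction instead derives equivalence from the shared defect calculus without ever citing Axiom~II, which makes the proposition carry more independent weight. Second, for defect-degree preservation you use the ``morphisms are defect-nonincreasing, apply to $F$ and $F^{-1}$ symmetrically'' sandwich (which you correctly identify as the delicate step), whereas the paper argues once that defect degree is \emph{intrinsic} to the data (Axiom~III) and hence preserved outright by both $F_{12}$ and $F_{21}$. Both arguments are sound; yours is marginally more elementary in that it only uses monotonicity plus invertibility, while the paper's appeals to the stronger intrinsicality built into Axiom~III. Everything downstream — levelwise filtration agreement and the induced isomorphism on associated graded objects — is the same in both proofs.
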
 

\begin{proof}
Let $(\mathcal P^{(1)},\circ)$ and $(\mathcal P^{(2)},\circ)$ be two algebraic
Phases obtained from the same admissible phase datum but via different operator
realizations (e.g.\ different choices of generating sets, ambient operator
algebras, or equivalent generating additive characters).

By admissibility and functoriality of phase extraction
(Theorem~\ref{thm:phase-extraction}), both constructions are functorial in the
underlying phase datum and use the same defect calculus: additive derivatives,
defect degree, and defect detection are defined intrinsically on phase functions 
before realization as operators. Consequently, there exist canonical Phase morphisms
\[
F_{12}:\mathcal P^{(1)}\to\mathcal P^{(2)}, \qquad
F_{21}:\mathcal P^{(2)}\to\mathcal P^{(1)}
\]
induced by identifying generators corresponding to the same phase functions.
These morphisms preserve the distinguished operations and are defect-compatible
by construction (Axiom~IV).

Since defect degree is defined intrinsically from defect data and the
distinguished operations (Axiom~III), both $F_{12}$ and $F_{21}$ preserve defect 
degree:
\[
\deg_{\Def}(F_{12}(T))=\deg_{\Def}(T), \qquad
\deg_{\Def}(F_{21}(S))=\deg_{\Def}(S).
\]
Therefore they preserve the canonical filtration levelwise:
\[
F_{12}(\mathcal P^{(1)}_k)\subseteq \mathcal P^{(2)}_k,
\qquad
F_{21}(\mathcal P^{(2)}_k)\subseteq \mathcal P^{(1)}_k.
\]

By uniqueness of the canonical filtration induced by defect
(Theorem~\ref{thm:canonical-filtration}), the filtrations agree under these maps,
and the induced maps on associated graded objects are mutually inverse
isomorphisms. Hence $(\mathcal P^{(1)},\circ)$ and $(\mathcal P^{(2)},\circ)$ are equivalent
algebraic Phases in the sense of
Definition~\ref{def:phase-equivalence}. 
\end{proof}

\begin{remark}
An algebraic Phase extracted from fixed admissible phase data may admit multiple
operator realizations, reflecting presentation level degrees of freedom.
Nevertheless, all such realizations yield the same algebraic Phase object up to
equivalence.
\end{remark} 

\subsection*{Defect equivalence principle}

We next address robustness at the level of defect itself.
There are, in principle, many ways to formalize defect, depending on how
non-rigidity is detected from phase interactions.
Algebraic Phase Theory adopts the convention that defect is recorded only
through the canonical filtration and its finite termination behaviour.
Accordingly, distinct defect constructions are identified whenever they 
induce the same filtration and termination depth, since all invariants of
the theory depend only on this induced complexity structure.

\begin{principle}\label{prin:defect-equivalence}
Distinct defect constructions that induce the same canonical filtration and the
same termination depth are \emph{identified} within Algebraic Phase Theory.
\end{principle}

\begin{remark}\label{rem:defect-equivalence}
Principle~\ref{prin:defect-equivalence} is not a theorem about a fixed algebraic
Phase, but a foundational convention of Algebraic Phase Theory. 
The theory is designed to record defect only through the induced intrinsic
complexity filtration and its finite termination behaviour.
Accordingly, two defect formalisms are identified whenever they induce the same
canonical filtration and termination depth.

This identification is harmless: all invariants and statements in APT are
formulated purely in terms of the canonical filtration, its associated graded
object, and termination depth.
Therefore, if two defect packages induce the same filtration, APT cannot
distinguish them by any construction it subsequently performs.
This is formalized in Proposition~\ref{prop:defect-equivalence-wellposed}.
\end{remark}

As usual, we adopt the convention $\mathcal P_{-1}:=0$ when forming associated
graded objects.

\begin{proposition}\label{prop:defect-equivalence-wellposed} 
Let $(\mathcal P,\circ)$ be an algebraic Phase, and suppose we are given two
functorial defect constructions $\Def^{(1)}$ and $\Def^{(2)}$ on $\mathcal P$,
each satisfying Axioms~I-V with respect to the same underlying operations
$\circ$. Let $\{\mathcal P_k^{(1)}\}_{k\ge 0}$ and
$\{\mathcal P_k^{(2)}\}_{k\ge 0}$ be the corresponding canonical filtrations. 

Assume that
\[
\mathcal P_k^{(1)}=\mathcal P_k^{(2)} \quad \text{for all } k\ge 0,
\]
and that the termination depths agree.
Then all filtration derived invariants of Algebraic Phase Theory coincide for
the two defect formalisms. In particular:
\begin{enumerate}[label=\textup{(\alph*)}] 
\item the rigid substructure is the same, namely
$\mathcal P_0^{(1)}=\mathcal P_0^{(2)}$;
\item the defect degree, interpreted as intrinsic complexity, agrees for every
element $T\in\mathcal P$, so that
\[
\deg_{\Def^{(1)}}(T)=\deg_{\Def^{(2)}}(T);
\]
\item the associated graded objects agree canonically,
\[
\operatorname{gr}^{(1)}(\mathcal P)
:=\bigoplus_{k\ge 0}\mathcal P_k^{(1)}/\mathcal P_{k-1}^{(1)}
\cong
\bigoplus_{k\ge 0}\mathcal P_k^{(2)}/\mathcal P_{k-1}^{(2)}
=: \operatorname{gr}^{(2)}(\mathcal P),
\]
with the isomorphism induced by the identity on each filtration piece.
\end{enumerate}
Consequently, from the perspective of Algebraic Phase Theory, which depends only
on these invariants, the defect constructions $\Def^{(1)}$ and $\Def^{(2)}$
encode the same structural information.
\end{proposition}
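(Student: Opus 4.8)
The plan is to reduce the entire statement to one structural observation: under Axioms~I and~III together with Definition~\ref{def:canonical-filtration}, the canonical filtration and the defect degree are interdefinable. By definition $\mathcal P_k^{(i)}=\{T\in\mathcal P:\deg_{\Def^{(i)}}(T)\le k\}$, and conversely, by the biconditional of Axiom~III, the defect degree is recovered from the filtration by the formula $\deg_{\Def^{(i)}}(T)=\min\{k\ge 0:T\in\mathcal P_k^{(i)}\}$; the minimum is finite because the termination hypothesis forces $\mathcal P_N^{(i)}=\mathcal P$ for some $N$, and it equals $0$ exactly on the rigid core by Theorem~\ref{thm:canonical-filtration}(d). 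First I would record that each of $\Def^{(1)},\Def^{(2)}$ satisfies Axioms~I-V with the same $\circ$, so Theorem~\ref{thm:canonical-filtration} applies to each separately and the min-formula is legitimate for both.

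For part~(b), I would then simply substitute the hypothesis $\mathcal P_k^{(1)}=\mathcal P_k^{(2)}$ (for all $k\ge 0$) into the min-formula: for every $T\in\mathcal P$,
\[
\deg_{\Def^{(1)}}(T)=\min\{k:T\in\mathcal P_k^{(1)}\}=\min\{k:T\in\mathcal P_k^{(2)}\}=\deg_{\Def^{(2)}}(T).
\]
Part~(a) is then the specialization to defect degree $0$: $\mathcal P_0^{(i)}$ is by definition the defect-zero substructure, which by Theorem~\ref{thm:canonical-filtration}(d) is the rigid substructure, and equality is just the $k=0$ instance of the filtration hypothesis. Part~(c) is even more immediate: with the stated convention $\mathcal P_{-1}:=0$, each graded piece $\mathcal P_k^{(i)}/\mathcal P_{k-1}^{(i)}$ is formed from filtration data alone, and since the numerator and denominator coincide on the nose for the two formalisms, the identity map on $\mathcal P$ descends degree by degree to the asserted isomorphism; agreement of termination depths guarantees that the two direct sums have the same finite range of nonzero summands, so nothing is lost. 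The concluding sentence then follows from the design principle recorded in Remark~\ref{rem:defect-equivalence}: every invariant APT subsequently forms is a function of the canonical filtration, its associated graded object, and the termination depth, all of which have just been shown to coincide.

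The main (and only mild) obstacle is to be careful that $\deg_{\Def}$ is genuinely \emph{characterized} by the filtration rather than being extra, independent data; this is precisely what Axiom~III (the biconditional ``$T\in\mathcal P_k$ iff $\deg_{\Def}(T)\le k$'') and Definition~\ref{def:canonical-filtration} secure, so I would cite these explicitly at the point where the min-formula is introduced. A secondary point worth stating plainly is that the isomorphism in~(c) is \emph{canonical} exactly because it is induced by the identity on $\mathcal P$, not by an arbitrary choice; this is what makes the identification in Principle~\ref{prin:defect-equivalence} harmless. Beyond these bookkeeping remarks, the proof is a direct unwinding of definitions and requires no further input.
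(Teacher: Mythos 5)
Your proposal is correct and follows essentially the same route as the paper: both arguments hinge on the min-formula $\deg_{\Def^{(i)}}(T)=\min\{k:T\in\mathcal P_k^{(i)}\}$ supplied by Axiom~III, from which (b) is immediate, (a) is the $k=0$ specialization, and (c) follows because the graded pieces are built from filtration data alone. The only minor difference is that you are somewhat more explicit about invoking termination to ensure the minimum is finite and about the interdefinability of $\deg_{\Def}$ with the filtration, which is a helpful clarification but not a different argument.
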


\begin{proof}
Assume $\mathcal P_k^{(1)}=\mathcal P_k^{(2)}$ for all $k$. 

(a) In Algebraic Phase Theory, the rigid substructure of a Phase is defined to be the
defect zero stratum $\mathcal P_0$, consisting of all elements on which no defect
is detected. These are precisely the elements of minimal complexity, forming the
lowest level of the canonical filtration. 

Since the two defect constructions induce identical filtrations, their level zero
pieces coincide. Hence 
\[
\mathcal P_0^{(1)}=\mathcal P_0^{(2)}.
\]

\textbf{(b)} By Axiom~III, defect induced complexity is measured intrinsically by the canonical
filtration. The defect degree of an element $T\in\mathcal P$ is defined as the
smallest level of the filtration at which $T$ appears: 
\[
\deg_{\Def^{(i)}}(T)=\min\{k:\ T\in \mathcal P_k^{(i)}\}.
\]

Since the two defect constructions produce exactly the same canonical filtration,
each element of $\mathcal P$ appears at the same filtration level in both cases.
Therefore, the defect degree assigned to any element is the same under
$\Def^{(1)}$ and $\Def^{(2)}$.

(c) The associated graded object records the structure of the Phase one complexity
level at a time. It is constructed entirely from the canonical filtration by
taking, at each level, the quotient that isolates what is genuinely new at that
level relative to all lower levels.

Since the two defect constructions induce identical filtrations, the successive
quotients
\[
\mathcal P_k^{(1)}/\mathcal P_{k-1}^{(1)} \quad \text{and} \quad
\mathcal P_k^{(2)}/\mathcal P_{k-1}^{(2)}
\]
agree for every $k$. Consequently, the identity map induces a canonical
isomorphism of associated graded objects,
\[
\operatorname{gr}^{(1)}(\mathcal P)\cong \operatorname{gr}^{(2)}(\mathcal P).
\]

The same reasoning applies to termination. The termination depth is defined as the
least level $N$ for which the filtration stabilizes, meaning $\mathcal P_N=\mathcal P$.
Because the filtrations coincide levelwise, they necessarily stabilize at the
same depth.

Therefore, all invariants of Algebraic Phase Theory derived from defect,
filtration, or termination agree for the two defect constructions. 
\end{proof}

\section{Conclusion}

This paper introduces Algebraic Phase Theory as a framework for extracting
intrinsic algebraic structure from phase interactions and organizing it through
defect, canonical filtration, and finite termination. Starting from minimal
admissible phase data, we showed that defect is not auxiliary but forces a unique,
functorial hierarchy of complexity, and that bounded phase degree imposes an
intrinsic termination of this hierarchy.

The radical quadratic model over $R=\mathbb{F}_2[u]/(u^2)$ serves as the minimal
genuinely nontrivial instance of the theory. In this setting, nilpotence forces
the emergence of nontrivial defect while simultaneously enforcing finite
termination. We proved that quadratic depth is a sharp structural boundary: any
attempt to incorporate genuinely higher degree interactions necessarily
introduces new defect strata and breaks finite control.

From this perspective, Algebraic Phase Theory is not a classification scheme but
a boundary theory. It isolates which algebraic structures are forced by phase
interactions and identifies the precise point beyond which no functorial,
finitely terminating extension is possible. The results of this paper establish
the foundational layer of the theory, on which the subsequent papers in this
series build.

\bibliographystyle{amsplain}
\bibliography{references}

\end{document}